\documentclass[a4paper]{article}

% PACKAGES
\usepackage{amsfonts}
\usepackage{amsmath}
\usepackage{amssymb}
\usepackage{amsthm}
\usepackage[UKenglish]{babel}
\usepackage{Baskervaldx}
\usepackage{esint}
\usepackage{bm}
\usepackage{booktabs}
\usepackage{cite}
\usepackage[T1]{fontenc}
\usepackage{mathtools}
\usepackage{url}
\usepackage{xcolor}
	\definecolor{Blue}{HTML}{3d25b9}
	\definecolor{Pink}{HTML}{ec028d}

% ----------------------------------------------------------------------------------------------------

% FORMATTING
\usepackage{enumitem}
	\setlist{topsep=0pt,itemsep=0pt}
\usepackage[top=25mm,bottom=25mm,left=24.8mm,right=24.8mm]{geometry}
\usepackage{microtype}
\usepackage{sectsty}
	\sectionfont{\large}
	\subsectionfont{\normalsize}
\usepackage{titlesec}
	\titlespacing{\section}{0pt}{12pt}{0pt}
	\titlespacing{\subsection}{0pt}{6pt}{0pt}
\usepackage{titling}
\usepackage[nottoc]{tocbibind} 
\usepackage{tocloft}

\linespread{1.1}

\setlength\parindent{0pt}
\setlength{\parskip}{6pt}
\setlength{\skip\footins}{12pt}
\setlength{\cftbeforetoctitleskip}{4pt}
\setlength{\cftaftertoctitleskip}{4pt}
\setlength{\cftbeforesecskip}{4pt}

% ----------------------------------------------------------------------------------------------------
% HYPERREF
\usepackage[colorlinks=true,linkcolor=Blue,citecolor=Blue]{hyperref}
	\hypersetup{breaklinks=true}
\usepackage[capitalise,noabbrev]{cleveref}
	\crefname{equation}{equation}{equations}
	\crefname{conjecture}{Conjecture}{Conjectures}

% ----------------------------------------------------------------------------------------------------

% ENVIRONMENTS
\theoremstyle{plain}
	\newtheorem{theorem}{Theorem}
	\newtheorem{proposition}[theorem]{Proposition}

\theoremstyle{definition}
	\newtheorem{definition}[theorem]{Definition}

% ----------------------------------------------------------------------------------------------------

% SHORTCUTS & USEFUL COMMANDS

\newcommand{\dd}{\mathrm{d}}

\renewcommand{\leq}{\leqslant}
\renewcommand{\geq}{\geqslant}

\newcommand{\Res}{\mathop{\,\rm Res\,}}

% ----------------------------------------------------------------------------------------------------

% NOTATION SHORTCUTS
\newcommand{\T}{\mathcal{T}}
\newcommand{\modm}{\mathcal{M}}
\newcommand{\LL}{\bm{L}}
\newcommand{\bq}{\mathbb{Q}}
\newcommand{\br}{\mathbb{R}}
\newcommand{\bz}{\mathbb{Z}}

\newcommand{\cc}{\mathcal{C}}
\newcommand{\ce}{\mathcal{E}}
\newcommand{\co}{\mathcal{O}}
\newcommand{\cu}{\mathcal{U}}

% ----------------------------------------------------------------------------------------------------

% TITLE
\title{A $q$-analogue of Mirzakhani's recursion for Weil--Petersson volumes}
\author{Norman Do \and Paul Norbury}

% ----------------------------------------------------------------------------------------------------

\begin{document}

\makeatletter
\textbf{\large \thetitle}

\textbf{\theauthor}
\makeatother

School of Mathematics, Monash University, VIC 3800 Australia \\
School of Mathematics and Statistics, The University of Melbourne, VIC 3010 Australia \\
Email: \href{mailto:norm.do@monash.edu}{norm.do@monash.edu}, \href{mailto:norbury@unimelb.edu.au}{norbury@unimelb.edu.au}

{\em Abstract.} We define $q$-analogues of Mirzakhani's recursion for Weil--Petersson volumes and the Stanford--Witten recursion for super Weil--Petersson volumes. Okuyama recently introduced a $q$-deformation of the Gaussian Hermitian matrix model which produces quasi-polynomials that recover the Weil--Petersson volumes via a rescaled $q \to 1$ limit. The $q$-deformations of the Weil--Petersson volumes produced here agree with the top degree terms of Okuyama's quasi-polynomials and suggest a variation of Okuyama's methods to the super setting.

\emph{Acknowledgements.} The first author was supported by the Australian Research Council grant FT240100795.

\emph{2020 Mathematics Subject Classification.} 14H10, 14D23, 32G15

~

\hrule

~

\tableofcontents

~

\hrule

~

\section{Introduction} \label{sec:introduction}

The Weil--Petersson volume of the moduli space $\modm_{g,n}$ of genus $g$ curves with $n$ marked points is defined from the K\"ahler form $\omega^{\mathrm{WP}}$ of the Weil--Petersson metric by
\[
V^{\mathrm{WP}}_{g,n} = \int_{\modm_{g,n}} \!\! \exp \omega^{\mathrm{WP}}.
\]
The Weil--Petersson form $\omega^{\mathrm{WP}}$ can be defined purely in terms of the complete hyperbolic structure associated to a genus $g$ curve with $n$ marked points. Wolpert proved that the Fenchel--Nielsen coordinates on the Teichm\"uller space $\T_{g,n}$ are Darboux coordinates for $\omega^{\mathrm{WP}}$~\cite{WolWei}. More generally, one can use Fenchel--Nielsen coordinates on the Teichm\"uller space $\T_{g,n}(L_1, \ldots, L_n)$ of hyperbolic surfaces with geodesic boundary components of lengths $(L_1, \ldots, L_n) \in\br_{\geq 0}^n$. Wolpert's theorem then enables one to define a family of deformations $\omega^{\mathrm{WP}}(L_1, \ldots, L_n)$ of the Weil--Petersson form $\omega^{\mathrm{WP}}=\omega^{\mathrm{WP}}(0, \ldots, 0)$ with associated volumes 
\[
V^{\mathrm{WP}}_{g,n}(L_1, \ldots, L_n)=\int_{\modm_{g,n}} \!\! \exp\omega^{\mathrm{WP}}(L_1, \ldots, L_n).
\]

Mirzakhani produced relations -- see \cref{volrecWP} in \cref{sec2} -- between the volumes $V^{\mathrm{WP}}_{g,n}(L_1, \ldots, L_n)$ that determine the volumes recursively from $V^{\mathrm{WP}}_{0,3}(L_1,L_2,L_3)$ and $V^{\mathrm{WP}}_{1,1}(L_1)$,~\cite{MirSim}. She used this recursion to prove that the volumes are polynomial in the lengths and moreover that $V^{\mathrm{WP}}_{g,n}(L_1, \ldots, L_n) \in \bq[\pi^2][L_1^2, \ldots, L_n^2]$.

The main purpose of this paper is to introduce a natural $q$-deformation of Mirzakhani's recursion that produces a family of polynomials $V_{g,n}(L_1, \ldots, L_n) \in \bq[[q]][L_1^2, \ldots, L_n^2]$, whose coefficients are $q$-series. The recursion requires integration using the kernels $D_q(x,y,z)$ and $R_q(x,y,z)$, which we now introduce. These are defined from the function
\begin{equation} \label{Hq}
H_q(x,y) = \frac{1}{2} \sum_{m=1}^\infty(-1)^{m-1} q^{m^2/2} (q^{m/2} + q^{-m/2}) \Big(e^{\frac{1}{2}(x+y) (q^{m/2}-q^{-m/2})} + e^{\frac{1}{2}(x-y) (q^{m/2}-q^{-m/2})} \Big),
\end{equation}
via the formulas
\[
\frac{\partial}{\partial x} D_q(x,y,z) = H_q(y+z,x) \qquad \text{and} \qquad \frac{\partial}{\partial x} R_q(x,y,z) = \frac{1}{2} \big( H_q(z,x+y) + H_q(z,x-y) \big),
\]
together with the initial conditions $D_q(0,y,z) = R_q(0,y,z) = 0$.

Define the polynomials $V_{g,n}(L_1, \ldots, L_n) \in\bq[[q]][L_1^2, \ldots, L_n^2]$ from the base cases
\begin{align}
V_{0,1}(L) &:= 0 \notag \\
V_{0,2}(L_1, L_2) &:= 0 \notag \\
V_{0,3}(L_1,L_2,L_3) &:= 1 \notag \\
V_{1,1}(L) &:= \frac{1}{2L} \int_0^\infty \!\! xD_q(L,x,x) \, \dd x, \label{v11}
\end{align}
and the recursion
\begin{align} 
L_1&V_{g,n}(L_1, \LL_K)=\sum_{j=2}^n\int_0^\infty \!\! x R_q(L_1,L_j,x) \, V_{g,n-1}(x, \LL_{K \setminus \{j\}}) \, \dd x \notag \\
&+ \frac{1}{2} \int_0^\infty \!\! \int_0^\infty \!\! xy D_q(L_1, x, y) \Big[V_{g-1,n+1}(x, y, \LL_K) + \sum_{\substack{g_1+g_2=g \\ I \sqcup J = K}} V_{g_1,|I|+1}(x, \LL_I) \, V_{g_2,|J|+1}(y, \LL_J) \Big] \, \dd x \, \dd y. \label{qvolrec}
\end{align}
Here, we use the notation $K = \{2, 3, \ldots, n\}$ and write $\LL_I = (L_{i_1}, L_{i_2}, \ldots, L_{i_m})$ for $I = \{i_1, i_2, \ldots, i_m\}$.

Calculating the integral of \cref{v11} and applying the recursion of \cref{qvolrec} yields the formulas
\begin{align*}
V_{1,1}(L) &= \frac{1}{48} L^2 + \frac{1}{2} \zeta_q(2) \\
V_{0,4} (L_1, L_2, L_3, L_4) &= \frac{1}{2} \sum_{i=1}^4 L_i^2 + 12 \zeta_q(2) \\
V_{1,2}(L_1, L_2) &= \frac{1}{192} (L_1^2+L_2^2)^2 + \frac{1}{2} \zeta_q(2) (L_1^2+L_2^2) + 5\zeta_q(4) + 7\zeta_q(2)^2.
\end{align*}
These use the $q$-zeta function evaluated at the even integers, which we define via the formula
\begin{equation} \label{eq:qzeta}
\zeta_q(2k)=\sum_{m=1}^\infty \frac{q^{mk}}{(1-q^m)^{2k}}.
\end{equation}
Observe that this is a $q$-series that converges for $|q|<1$ and satisfies
\[
\lim_{q \to 1} (1-q)^{2k} \, \zeta_q(2k) = \zeta(2k).
\]

\begin{theorem} \label{thm:main}
The recursion of \cref{qvolrec} defines symmetric polynomials $V_{g,n}(L_1, \ldots, L_n) \in\bq[[q]][L_1^2, \ldots, L_n^2]$ which tend to the Weil--Petersson volumes in the following rescaled $q \to 1$ limit.
\[
\lim_{q \to 1} (1-q)^{6g-6+2n} \, V_{g,n} \Big( \frac{L_1}{1-q}, \ldots, \frac{L_n}{1-q} \Big) = V^{\mathrm{WP}}_{g,n}(L_1, \ldots, L_n)
\]
\end{theorem}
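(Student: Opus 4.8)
The plan is to prove both assertions of \cref{thm:main} — that the recursion produces symmetric elements of $\bq[[q]][L_1^2,\dots,L_n^2]$, and that they degenerate to the Weil--Petersson volumes — by a single induction on $2g-2+n$, the base cases $V_{0,3}$ and $V_{1,1}$ being already in hand. The engine is a moment computation for the kernel $H_q$ of \cref{Hq}. Writing $u_m=q^{m/2}-q^{-m/2}$ and using that $H_q(x,t)=\sum_{m\ge1}(-1)^{m-1}q^{m^2/2}(q^{m/2}+q^{-m/2})\,e^{\frac12 x u_m}\cosh(\tfrac12 t u_m)$ decays in $x$ because $u_m<0$ for $0<q<1$, I would evaluate $\int_0^\infty x^{2k+1}H_q(x,t)\,\dd x$ termwise via $\int_0^\infty x^{2k+1}e^{\frac12 x u_m}\,\dd x=(2k+1)!\,2^{2k+2}/u_m^{2k+2}$ and an expansion of $\cosh$. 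Everything then reduces to the family of $q$-series
\[
S_p:=\sum_{m\ge1}(-1)^{m-1}q^{m^2/2}(q^{m/2}+q^{-m/2})\,u_m^{-2p},
\]
and the claim I must establish is that $S_p=0$ for $p\le-1$, that $S_0=1$, and that $S_p=\zeta_q(2p)$ of \cref{eq:qzeta} for $p\ge1$.

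Granting these identities, the moment integral becomes the explicit even polynomial $\int_0^\infty x^{2k+1}H_q(x,t)\,\dd x=(2k+1)!\sum_{p=0}^{k+1}\frac{2^{2p}}{(2k+2-2p)!}\,S_p\,t^{2k+2-2p}\in\bq[[q]][t^2]$ of degree $2k+2$, the exact $q$-analogue of the integral transform underlying \cref{volrecWP}. The vanishing $S_p=0$ for $p\le-1$ is what truncates the $\cosh$-expansion to a polynomial, and I expect it to follow from a telescoping of the theta-like weight: setting $a_m=q^{m(m-1)/2}$ one has $q^{m^2/2}(q^{m/2}+q^{-m/2})=a_{m+1}+a_m$, and reindexing the two resulting sums produces the cancellation (this already gives $S_0=1$ from $a_1=1$). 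The identification $S_p=\zeta_q(2p)$ for $p\ge1$ is the genuinely substantial point and, I anticipate, the main obstacle; it is a Lambert-series identity that I would prove either by the same reindexing together with the expansion $u_m^{-2p}=q^{mp}/(1-q^m)^{2p}=\sum_{j\ge p}\binom{j+p-1}{j-p}q^{mj}$, or by recognising the weight in \cref{Hq} as arising from a Jacobi theta function, in line with the matrix-model origin of $H_q$.

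With the lemma in place the inductive step is formal. The derivative definitions of $D_q$ and $R_q$ reduce the transforms in \cref{qvolrec} to iterated integrals of the moment polynomial $F_{2k+1}$ just computed: for instance $\int_0^\infty z^{2k+1}R_q(L_1,L_j,z)\,\dd z=\tfrac12\int_0^{L_1}\big[F_{2k+1}(x+L_j)+F_{2k+1}(x-L_j)\big]\,\dd x$, and the double $D_q$-integral reduces similarly to a two-variable moment of $H_q(x+y,\cdot)$, which is again an even polynomial by the same identities $S_p$. Since $F_{2k+1}$ is even, the $x$-integral is an odd polynomial in $L_1$ vanishing at $L_1=0$, so after dividing by $L_1$ the right-hand side of \cref{qvolrec} is an even polynomial in $L_1$ with coefficients in $\bq[[q]][L_2^2,\dots,L_n^2]$; feeding the induction hypothesis into the lower $V_{g',n'}$ in the integrand closes the induction on membership in $\bq[[q]][L_1^2,\dots,L_n^2]$. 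Symmetry in all of $L_1,\dots,L_n$ is not manifest from the recursion, and I would establish it exactly as in the classical theory, \cref{qvolrec} being the $q$-deformation of a symmetric (topological-recursion) structure.

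For the rescaled $q\to1$ limit I would argue through the kernels rather than the coefficients. Putting $q=e^{-\epsilon}$, so that $1-q\sim\epsilon$, one finds that the arguments rescale as $\tfrac12\,\tfrac{x}{1-q}\,u_m\to-\tfrac12 m x$ while the weight tends to $2(-1)^{m-1}$ (since $q^{m^2/2}\to1$ and $q^{m/2}+q^{-m/2}\to2$ for fixed $m$), whence
\[
\lim_{q\to1}H_q\Big(\frac{x}{1-q},\frac{t}{1-q}\Big)=\sum_{m\ge1}(-1)^{m-1}\big(e^{-\frac m2(x+t)}+e^{-\frac m2(x-t)}\big)=\frac{1}{1+e^{(x+t)/2}}+\frac{1}{1+e^{(x-t)/2}},
\]
which is precisely Mirzakhani's kernel. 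After justifying a dominated-convergence bound that lets this limit pass inside the moment integrals, the rescaled $D_q$ and $R_q$ converge to their classical counterparts, and the substitutions $L_i\mapsto L_i/(1-q)$, $x\mapsto x/(1-q)$ in \cref{qvolrec} — whose Jacobians together with the prefactor $(1-q)^{6g-6+2n}$ balance by additivity of $6g-6+2n$ across the recursion — turn it into Mirzakhani's recursion \cref{volrecWP}. Since the base cases match in the limit (the rescaled $V_{1,1}$ tends to $\tfrac{1}{48}L^2+\tfrac{\pi^2}{12}$ using $\lim_{q\to1}(1-q)^2\zeta_q(2)=\zeta(2)$), induction identifies the limit with $V^{\mathrm{WP}}_{g,n}$.
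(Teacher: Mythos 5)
Your overall architecture matches the paper's: reduce everything to the odd moments of $H_q$, establish a family of $q$-series identities for the sums $S_p$ that make those moments polynomial, and pass to the limit by induction through the recursion. But your central lemma, as stated, is false for $p\ge 2$. The correct evaluation (the paper's \cref{prop:qident}) is
\[
S_p=s_p\big(\zeta_q(2),\zeta_q(4),\ldots\big),
\]
where $s_p$ is defined by $\exp\big(\sum_m p_m t^m/m\big)=\sum_k s_k t^k$; thus $S_1=\zeta_q(2)$, but $S_2=\tfrac12\big(\zeta_q(4)+\zeta_q(2)^2\big)\neq\zeta_q(4)$. This is not a cosmetic slip: with $S_p=\zeta_q(2p)$ your moment polynomial acquires the wrong coefficients already at order four (compare the paper's $V_{1,2}$, whose constant term is $5\zeta_q(4)+7\zeta_q(2)^2$), and your rescaled $q\to1$ limit would produce $4^n\zeta(2n)$ where Mirzakhani's kernel integral produces the Taylor coefficients of $2\pi z/\sin(2\pi z)$, namely $4^n s_n(\zeta(2),\zeta(4),\ldots)$ --- for instance $\tfrac{14\pi^4}{45}$ versus your $\tfrac{8\pi^4}{45}$ at order $z^4$. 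The exponential (generating-function) form of the identity is forced on you precisely because the limit target is $2\pi z/\sin(2\pi z)=\exp\big(\sum_j\zeta(2j)(2z)^{2j}/j\big)$, an exponential of zeta values rather than a single zeta value per coefficient.

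Second, your proposed proof of the crucial vanishing $S_p=0$ for $p\le-1$ --- telescoping the weight $a_{m+1}+a_m$ --- works only for $p=0$, where the $m$-dependent factor $u_m^{-2p}$ is absent; for $p\le-1$ consecutive terms carry different powers of $u_m$ and nothing cancels. The paper obtains all of these identities simultaneously from Andrews' partial-fraction expansion of a theta quotient, which packages every $S_p$ into the single identity \cref{qid}: the $p\ge0$ cases are its Taylor coefficients at $t=0$, and the $p\le-1$ cases follow from a contour integral of $t^{2k-1}F(t)$ over large circles, where uniform bounds on the product side force the residue sum to vanish. You will need some such global identity (your instinct to recognise the weight as coming from a Jacobi theta function is the right one), since the negative-$p$ vanishing is exactly where the negative powers of $q$ must cancel and is the heart of polynomiality. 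The remaining ingredients --- symmetry via the oddness of $D_q$ and $R_q$ in their first argument, and the limit, which the paper takes through the convergence of the coefficients $b_n$ to Mirzakhani's $b_n^M$ rather than through the kernels, thereby avoiding your dominated-convergence step inside the integrals --- are fine in outline and consistent with the paper's \cref{sym} and \cref{limit}.
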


\cref{thm:main}, which is the first main result of the paper, comprises two parts. The first shows that the recursion produces symmetric polynomials with $q$-series coefficients. The second makes sense of the $q \to 1$ limit using the fact that the coefficients are convergent for $|q|<1$ and then calculates the limit via a comparison of the recursion of \cref{qvolrec} with Mirzakhani's recursion for Weil--Petersson volumes. \Cref{thm:main} justifies considering $V_{g,n}(L_1, \ldots, L_n)$ as a $q$-deformation of the Weil--Petersson volume $V^{\mathrm{WP}}_{g,n}(L_1, \ldots, L_n)$.

The main content of the first part of \cref{thm:main} is nicely demonstrated by the calculation of the integral of \cref{v11} that defines $V_{1,1}(L)$. Similar integrals arise more generally in the calculation of $V_{g,n}(L_1, \ldots, L_n)$ using the recursion of \cref{qvolrec}. A priori, we observe that $V_{1,1}(L) \in\bq[[q, q^{-1}]] [[L^2]]$ -- that is, the integral could contain terms with arbitrarily large powers of $L$ and negative powers of $q$.
\begin{align*}
V_{1,1}(L) &= \frac{1}{2L} \int_0^\infty \!\! x D_q(L,x,x) \, \dd x \\
&= \frac{1}{2L} \sum_{m=1}^\infty (-1)^{m-1} \frac{q^{m(m-1)/2}(1+q^m)}{q^{m/2}-q^{-m/2}}\frac{e^{\frac{L}{2}(q^{m/2}-q^{-m/2})}-e^{-\frac{L}{2}(q^{m/2}-q^{-m/2})}}{(q^{m/2}-q^{-m/2})^2}\\
&=\sum_{k=0}^\infty\frac{L^{2k}}{2^{2k+1}(2k+1)!}\sum_{m=1}^\infty(-1)^{m-1}q^{m(m-1)/2}(1+q^m)(q^{m/2}-q^{-m/2})^{2k-2}\\
&=\frac{1}{48}L^2+\frac{1}{2}\zeta_q(2)
\end{align*}
The second equality uses the series for $D_q$ and performs the integration explicitly, while the third equality expresses the result as a series in $L$. The final equality requires non-trivial $q$-series identities -- see \cref{qident} in \cref{sec2} -- to produce the coefficients $\frac{1}{48}$ and $\frac{1}{2}\zeta_q(2)$, and also to prove that the coefficients of $L^{2k}$ vanish for all $k \geq 2$. Observe that this vanishing requires the cancellation of terms that include negative powers of $q$.

The appearance of $\zeta_q(2)$ in $V_{1,1}(L)$ is a special case of a more general phenomenon. As a consequence of \cref{F2k+1} below, the coefficients of $V_{g,n}(L_1, \ldots, L_n)$ are polynomials in values of the $q$-zeta function.
\[
V_{g,n}(L_1, \ldots, L_n) \in \bq[\zeta_q(2), \zeta_q(4), \ldots, \zeta_q(6g-6+2n)] [L_1^2, \ldots, L_n^2]
\]

The second main result of the paper is an analogue of \cref{thm:main} in the context of super Weil--Petersson volumes. The notion of super Weil--Petersson volumes was defined by Stanford and Witten in their work on JT gravity and its generalisations~\cite{SWiJTG}. They produced a super analogue of Mirzakhani's recursion for Weil--Petersson volumes which was later proved by the second author~\cite{NorEnu}.

We introduce a natural $q$-deformation of the Stanford--Witten recursion that produces a family of polynomials %$\widehat{V}_{g,n}(L_1, \ldots, L_n) \in \bq[[q]][L_1^2, \ldots, L_n^2]$, whose coefficients are $q$-series, 
$\widehat{V}_{g,n}^{(m)}(L_1, \ldots, L_n) \in\bq[[q]][L_1^2, \ldots, L_n^2]$ for integers $g \geq 0$, $n \geq 1$ and $m \geq 0$, following previous work of the second author, \cite{NorSup}.  In fact, we prove that 
\[
\widehat{V}_{g,n}^{(m)}(L_1, \ldots, L_n) \in \bq[\zeta^{\mathrm{odd}}_q(2), \zeta^{\mathrm{odd}}_q(4), \ldots, \zeta^{\mathrm{odd}}_q(2g-2+m)] [L_1^2, \ldots, L_n^2] .
\]
Note that $2g-2+m$ is always even.  We define the odd $q$-zeta function evaluated at the even integers via the following sum over the odd positive integers.
\[
\zeta^{\mathrm{odd}}_q(2k)=\sum_{m \text{ odd}} \frac{q^{mk}}{(1-q^m)^{2k}}
\]
Observe that this is a $q$-series that converges for $|q|<1$.

It is convenient to assemble the polynomials $\widehat{V}_{g,n}^{(m)}(L_1, \ldots, L_n)$ into series via the equation
\begin{equation} \label{supvolpol}
\widehat{V}_{g,n}(L_1, \ldots, L_n; s) := \sum_{m=0}^\infty \frac{s^m}{m!} \, \widehat{V}_{g,n}^{(m)}(L_1, \ldots, L_n),
\end{equation}
where we will often omit the variable $s$ in the notation as it is implicit. 

The recursion requires integration using the kernels $\widehat{D}_q(x,y,z)$ and $\widehat{R}_q(x,y,z)$, which we now introduce. These are defined from the sum over the odd positive integers
\begin{align}
\widehat{H}_q(x,y) = \frac{1}{8\psi(q)^{2}} \sum_{m \text{ odd}} (-1)^{(m-1)/2} &q^{(m^2-1)/4}(q^{m/2}+q^{-m/2}) \notag \\
&\Big( e^{\frac{1}{4}(x-y)(q^{m/2}-q^{-m/2})}-e^{\frac{1}{4}(x+y)(q^{m/2}-q^{-m/2})} \Big), \label{Hqsup}
\end{align}
via the formulas
\[
\widehat{D}_q(x,y,z)=\widehat{H}_q(y+z,x) \qquad \text{and} \qquad \widehat{R}_q(x,y,z) = \frac{1}{2} \big(\widehat{H}_q(z,x+y)+\widehat{H}_q(z,x-y) \big).
\]
The definition of $\widehat{H}_q(x,y)$ requires the following Ramanujan theta function, which converges for $|q|<1$.
\[
\psi(q) = \prod_{m=1}^\infty\frac{1-q^{2m}}{1-q^{2m-1}}=\frac{(1-q^2)(1-q^4)(1-q^6) \cdots}{(1-q)(1-q^3)(1-q^5) \cdots} = 1+q+q^3+q^6+q^{10} + \cdots
\]

Define $\widehat{V}_{g,n}(L_1, \ldots, L_n) \in\bq[[q]] [L_1^2, \ldots, L_n^2] [[s^2]]$ from the initial conditions $\widehat{V}^{(0)}_{0,1}(L) = 0$, $\widehat{V}^{(1)}_{0,1}(L) = 0$, $\widehat{V}^{(2)}_{0,1}(L) = 1$ and $\widehat{V}^{(0)}_{1,1}(L_1)=\frac{1}{8}$, and the same recursion as \cref{qvolrec} although with the new kernels $\widehat{D}_q(x,y,z)$ and $\widehat{R}_q(x,y,z)$.
\begin{align}
L_1 &\widehat{V}_{g,n}(L_1, \LL_k) = \sum_{j=2}^n \int_0^\infty \!\! x \widehat{R}_q(L_1,L_j,x) \, \widehat{V}_{g,n-1}(x, L_2, \LL_{K \setminus \{j\}}) \, \dd x \notag \\
&+ \frac{1}{2} \int_0^\infty \!\! \int_0^\infty \!\! xy \widehat{D}_q(L_1,x,y) \, \Big[ \widehat{V}_{g-1,n+1}(x,y,L_K) + \mathop{\sum_{g_1+g_2=g}}_{I \sqcup J = K} \widehat{V}_{g_1,|I|+1}(x, \LL_I) \, \widehat{V}_{g_2,|J|+1}(y, \LL_J) \Big] \, \dd x \, \dd y \label{qvolrecsup}
\end{align}
For example, the recursion produces all terms of the disk series, the first of which are as follows.
\[
\widehat{V}_{0,1}(L; s) = \frac{s^2}{2!} + \bigg( \frac{1}{2}L^2 + 48\zeta^{\mathrm{odd}}_q(2) \bigg) \frac{s^4}{4!} + \bigg( \frac{3}{8} L^4 + 240\zeta^{\mathrm{odd}}_q(2) L^2 +5760\zeta^{\mathrm{odd}}_q(4) + 17280\zeta^{\mathrm{odd}}_q(2)^2 \bigg) \frac{s^6}{6!} + \cdots
\]

\begin{theorem} \label{super}
The recursion of \cref{qvolrecsup} defines symmetric polynomials $\widehat{V}_{g,n}^{(m)}(L_1, \ldots, L_n) \in \bq[[q]] [L_1^2, \ldots, L_n^2]$. The series defined by \cref{supvolpol} tend to corresponding series for super Weil--Petersson volumes in the following rescaled $q\to 1$ limit.
\[
\lim_{q \to 1} (1-q)^{2g-2} \, \widehat{V}_{g,n}\Big( \frac{L_1}{1-q}, \ldots, \frac{L_n}{1-q}; (1-q)s \Big) = \widehat{V}^{\mathrm{WP}}_{g,n}(L_1, \ldots, L_n; s)
\]
\end{theorem}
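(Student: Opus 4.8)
The plan is to follow the proof of \cref{thm:main} as closely as possible, since \cref{super} is its super analogue: both assertions -- polynomiality with symmetric $\bq[[q]]$-coefficients, and the rescaled $q \to 1$ limit -- should be proved by a single induction on $2g-2+n$, with the disk terms $\widehat{V}_{0,1}^{(m)}$ and the base case $\widehat{V}_{1,1}^{(0)}(L_1) = \tfrac18$ anchoring the induction. The inductive step reduces everything to understanding how the integral operators built from $\widehat{D}_q$ and $\widehat{R}_q$ act on polynomials in $\bq[[q]][L^2]$, so the whole argument hinges on evaluating building-block integrals of the form $\int_0^\infty x^{2k+1} \widehat{H}_q(y,x) \, \dd x$.

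For the first assertion I would evaluate these building-block integrals term by term exactly as in the displayed computation of $V_{1,1}$ in the non-super case: integrating the exponentials in \cref{Hqsup} produces, for each odd $m$, a factor $(q^{m/2}-q^{-m/2})^{-(2k+2)}$ times a polynomial in $y$, a priori giving an element of $\bq[[q,q^{-1}]][[y^2]]$ with unbounded degree and negative powers of $q$. The content of Part 1 is that a super analogue of the identities \cref{qident} and \cref{F2k+1} -- adapted to the sum over odd $m$ and the weight $q^{(m^2-1)/4}$ appearing in $\widehat{H}_q$, and producing $\zeta^{\mathrm{odd}}_q$ in place of $\zeta_q$ -- forces the degree in $L$ to be bounded by $2g-2+m$ and the negative powers of $q$ to cancel, leaving coefficients that are polynomials in the $\zeta^{\mathrm{odd}}_q(2j)$. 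Symmetry in $L_1, \ldots, L_n$ would then follow, as in Mirzakhani's argument, from the manifest symmetry in $L_2, \ldots, L_n$ together with an identity exchanging $L_1 \leftrightarrow L_j$ at the level of the kernels, which I would verify directly from \cref{Hqsup}.

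The limit rests on two further inputs. First, a grading, established by the same induction: every monomial of $\widehat{V}_{g,n}^{(m)}$ of $L$-degree $\deg_L$ and $\zeta^{\mathrm{odd}}_q$-weight $w$ (where a factor $\zeta^{\mathrm{odd}}_q(2j)$ carries weight $j$) satisfies $\deg_L + 2w = 2g-2+m$. This is consistent with the stated membership $\widehat{V}_{g,n}^{(m)} \in \bq[\zeta^{\mathrm{odd}}_q(2), \ldots, \zeta^{\mathrm{odd}}_q(2g-2+m)][L^2]$ and with the disk series. Second, the asymptotics as $q \to 1$: writing $q = e^{-\epsilon}$ one has $(1-q)^{2j} \zeta^{\mathrm{odd}}_q(2j) \to (1-2^{-2j})\zeta(2j)$, while the standard theta asymptotic gives $\psi(q)^2 \sim \tfrac{\pi}{2}(1-q)^{-1}$. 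Under $x \mapsto x/(1-q)$ the exponent $\tfrac14(x \pm y)(q^{m/2}-q^{-m/2})$ tends to $-\tfrac{m}{4}(x \pm y)$, so $(1-q)^{-1}\widehat{H}_q$ at rescaled arguments converges to a finite kernel, which I would identify with the Stanford--Witten kernel. Because the grading makes every monomial of $\widehat{V}_{g,n}^{(m)}(L/(1-q))$ scale like $(1-q)^{-(2g-2+m)}$ -- the $L$-rescaling and the $\zeta^{\mathrm{odd}}_q$-asymptotics contributing equally by $\deg_L + 2w = 2g-2+m$ -- the prefactor $(1-q)^{2g-2}$ together with the substitution $s \mapsto (1-q)s$ in \cref{supvolpol} exactly cancels these powers, so each coefficient converges. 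Feeding the kernel limits and the inductive hypothesis into \cref{qvolrecsup} then shows that the limit satisfies the Stanford--Witten recursion with Stanford--Witten kernels and initial data, whence it equals $\widehat{V}^{\mathrm{WP}}_{g,n}$.

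The hard part will be the analytic heart of the limit: the sums over odd $m$ in \cref{Hqsup} do not converge uniformly as $q \to 1$ after rescaling, so interchanging the limit with the integration over $x$ and with the sum over $m$ is delicate. I expect the cleanest route is to avoid limiting the kernels pointwise and instead pass to the explicit polynomial expressions produced in Part 1, taking limits coefficient-by-coefficient. A secondary obstacle is the bookkeeping of normalization constants -- the factor $\tfrac{1}{8\psi(q)^2}$ and the numerical coefficients $48, 240, 5760, \ldots$ in the disk series -- which must match the Stanford--Witten normalization exactly; here the theta asymptotic $\psi(q)^2 \sim \tfrac{\pi}{2}(1-q)^{-1}$ and the factor $(1-2^{-2j})$ in the $\zeta^{\mathrm{odd}}_q$ limit must conspire correctly, and verifying this is where I anticipate the real work to lie.
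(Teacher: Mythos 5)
Your proposal follows essentially the same route as the paper, which proves \cref{super} via \cref{prop:qidentodd,prop:F2k+1odd,symsup,limitsup}: odd-$m$ $q$-series identities producing $\zeta^{\mathrm{odd}}_q$, explicit evaluation of $\widehat{F}_{2k+1}(y)=\int_0^\infty x^{2k+1}\widehat{H}_q(x,y)\,\dd x$ as a polynomial, symmetry via $\widehat{D}_q(0,y,z)=\widehat{R}_q(0,y,z)=0$ together with oddness of the kernels, and a coefficient-by-coefficient limit using $\lim_{q\to1}(1-q)^{2k}\zeta^{\mathrm{odd}}_q(2k)=\zeta^{\mathrm{odd}}(2k)$ and $\lim_{q\to1}(1-q)\psi(q)^2=\tfrac{\pi}{2}$ --- exactly the ingredients you list, with your ``avoid limiting the kernels pointwise'' fallback being precisely what the paper does. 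The one miscalibration is where you locate the difficulty: the genuinely new work is not the limit analytics but the identity of \cref{prop:qidentodd}, whose proof is not a routine adaptation of the Andrews partial-fraction identity used in the non-super case but a separate residue computation showing that the odd-$m$ sum equals $\psi(q)^2$ times the corresponding infinite product.
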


We relate the polynomials $V_{g,n}(L_1, \ldots, L_n)$ introduced here to recent work of Okuyama on the double-scaled SYK model, in which he defined quasi-polynomials $N^q_{g,n}(b_1, \ldots, b_n)$ that he interpreted as discrete analogues of the Weil--Petersson volumes~\cite{OkuDis}. Okuyama constructed these from a certain Hermitian matrix model and its associated loop equations. An interpretation for $N^q_{g,n}(b_1, \ldots, b_n)$ as an enumeration of maps and an explicit recursion for these quantities will appear in forthcoming work~\cite{DNTDou}.

The geometry underlying the constructions in this paper provides an interesting topic that deserves further study. The functions $D_q(x,y,z)$ and $R_q(x,y,z)$ are related to the functions $D(x,y,z)$ and $R(x,y,z)$ defined by Mirzakhani via a rescaled $q \to 1$ limit -- see \cref{qkerlim}. Mirzakhani produced the kernels $D(x,y,z)$ and $R(x,y,z)$ from the geometry of a hyperbolic pair of pants with geodesic boundary components of lengths $x, y, z$~\cite{MirSim}. They arise as natural lengths along these geodesic boundaries and appear in Mirzakhani's generalisation of McShane's identity~\cite{McSRem}. Similarly, the rescaled $q \to 1$ limits of the functions $\widehat{D}_q(x,y,z)$ and $\widehat{R}_q(x,y,z)$ are the functions $\widehat{D}(x,y,z)$ and $\widehat{R}(x,y,z)$ appearing in the recursion for super Weil--Petersson volumes -- see \cref{qkerlimsup}. Stanford and Witten produced geometric constructions for $\widehat{D}(x,y,z)$ and $\widehat{R}(x,y,z)$ using supergeometry~\cite{SWiJTG}. Geometric interpretations of the $q$-analogues $D_q(x,y,z)$ and $R_q(x,y,z)$, as well as their super counterparts $\widehat{D}_q(x,y,z)$ and $\widehat{R}_q(x,y,z)$, would be extremely interesting.

In another direction, one might hope to interpret the polynomials we introduce here as ``$q$-volumes'' defined by integration of $q$-de Rham cohomology classes, following Aomoto~\cite{AomQan}. This may be related to the recently defined notion of the Habiro ring~\cite{GSWZHab}, which raises the question of whether the constructions in this paper exhibit interesting behaviour as $q$ approaches roots of unity.

\Cref{sec2} contains the proof of \cref{thm:main}, which is divided into \cref{prop:qident,F2k+1,limit}. In \cref{sec3}, we define the Weil--Petersson super volumes before giving the proof of \cref{super}, which is again divided into more basic pieces -- namely, \cref{prop:qidentodd,prop:F2k+1odd,limitsup}.

\section{A \texorpdfstring{$q$}{q}-deformation of Weil--Petersson volumes} \label{sec2}

In this section, we prove the existence and properties of the integrals in the recursion of \cref{qvolrec}. We begin with a sequence of $q$-series identities.

\begin{proposition} \label{prop:qident}
For any $k\in\bz$,
\begin{equation} \label{qident}
\sum_{m=1}^\infty(-1)^{m-1}q^{m(m-1)/2}(1+q^m)\frac{q^{mk}}{(1-q^m)^{2k}} = s_k \big( \zeta_q(2), \zeta_q(4), \ldots \big),
\end{equation}
where $s_k$ is defined by
\begin{equation} \label{eq:sk}
\exp \bigg(\sum_{m=1}^\infty \frac{p_m}{m} \, t^m \bigg) = \sum_{k=-\infty}^\infty s_k(p_1, p_2, \ldots) \, t^k.
\end{equation}
For example, we have $s_k(p_1, p_2, \ldots) = 0$ for $k < 0$ and
\[
s_0(p_1, p_2, \ldots) = 1, \quad s_1(p_1, p_2, \ldots) = p_1, \quad s_2(p_1, p_2, \ldots) = \frac{1}{2}(p_2+p_1^2), \quad s_3(p_1, p_2, \ldots) = \frac{1}{6}(2p_3 + 3p_1 + p_1^3).
\]
\end{proposition}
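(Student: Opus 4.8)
The plan is to recognise the $q$-zeta values as power sums in an auxiliary family of variables and then to read the identity off a partial fraction expansion. First I would set $w_m := \dfrac{q^m}{(1-q^m)^2}$ for $m\geq 1$, so that $\dfrac{q^{mk}}{(1-q^m)^{2k}} = w_m^k$ and, by \cref{eq:qzeta}, $\zeta_q(2j) = \sum_{m\geq 1} w_m^j$ is exactly the $j$-th power sum in the $w_m$. Since the defining relation \cref{eq:sk} is precisely the classical generating function that converts power sums into complete homogeneous symmetric functions, $s_k(\zeta_q(2),\zeta_q(4),\ldots)$ equals $h_k(w_1,w_2,\ldots)$, the $k$-th complete homogeneous symmetric function in the $w_m$. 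Writing $c_m := (-1)^{m-1} q^{m(m-1)/2}(1+q^m)$, the proposition then becomes the single assertion that $\sum_{m\geq 1} c_m\, w_m^k = h_k(w_1,w_2,\ldots)$ for every $k\in\bz$, with the convention $h_k := 0$ for $k<0$.

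To establish this I would work with finite truncations, which keeps everything at the level of rational functions. For each $N$ the function $\prod_{m=1}^N (1-w_m t)^{-1}$ has only simple poles, at $t=w_m^{-1}$, and is $O(t^{-N})$ as $t\to\infty$, so it admits the genuine partial fraction expansion $\sum_{m=1}^N c_m^{(N)}(1-w_m t)^{-1}$ with $c_m^{(N)} = \prod_{1\leq n\leq N,\, n\neq m}(1-w_n/w_m)^{-1}$. Comparing the coefficient of $t^k$ on both sides gives $h_k(w_1,\ldots,w_N) = \sum_{m=1}^N c_m^{(N)} w_m^k$ for $k\geq 0$, while expanding at $t=\infty$ and using the vanishing to order $N$ yields $\sum_{m=1}^N c_m^{(N)} w_m^{-l} = 0$ for $1\leq l\leq N-1$; these are the $k\geq 0$ and $k<0$ cases respectively.

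The computational heart is the evaluation of the limiting residues. A direct expansion gives the clean factorisation $1 - \dfrac{w_n}{w_m} = \dfrac{(1-q^{n-m})(1-q^{n+m})}{(1-q^n)^2}$, whence $c_m = \prod_{n\neq m}\dfrac{(1-q^n)^2}{(1-q^{n-m})(1-q^{n+m})}$. Separating the factors with $n<m$ from those with $n>m$ and reindexing, the infinite product collapses through standard $q$-Pochhammer bookkeeping (using $\prod_{j=1}^{m-1}(1-q^{-j}) = (-1)^{m-1}q^{-m(m-1)/2}(q;q)_{m-1}$ and telescoping the remaining tails against $(q;q)_\infty$) to $(-1)^{m-1}q^{m(m-1)/2}(1+q^m)$, matching the definition of $c_m$; the same computation with the range restricted shows $c_m^{(N)}\to c_m$ as $N\to\infty$.

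Finally I would let $N\to\infty$. Since $w_m = O(q^m)$ decays geometrically for $|q|<1$, one has $h_k(w_1,\ldots,w_N)\to h_k(w_1,w_2,\ldots)$, and with uniform control on the $c_m^{(N)}$ the truncated sums converge to $\sum_{m\geq 1} c_m w_m^k$. This delivers $h_k(w) = \sum_m c_m w_m^k$ for $k\geq 0$ and $0 = \sum_m c_m w_m^{-l}$ for $l\geq 1$, which together are the claimed identity for all $k\in\bz$. The conceptual key is the identification of the $\zeta_q(2j)$ with power sums in the $w_m$; the two points requiring care are spotting the factorisation of $1-w_n/w_m$ that makes the residue product telescope, and justifying the interchange of limit and summation in passing from the finite partial fractions to the infinite identity.
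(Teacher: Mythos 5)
Your argument is correct, and it reaches the same underlying identity as the paper by a genuinely different route. Both proofs ultimately rest on the partial fraction expansion $\prod_m(1+t^2w_m)^{-1}=\sum_m c_m(1+t^2w_m)^{-1}$ with $w_m=q^m/(1-q^m)^2$ and $c_m=(-1)^{m-1}q^{m(m-1)/2}(1+q^m)$, together with the observation that $s_k(\zeta_q(2),\zeta_q(4),\ldots)$ is the complete homogeneous symmetric function $h_k(w)$. The paper obtains this expansion by importing Andrews' infinite partial fraction identity and massaging it, and then treats the $k<0$ cases by integrating $t^{2k-1}$ times both sides of \cref{qid} over large circles; you instead derive the expansion from scratch by truncating to $\prod_{m\leq N}(1-w_mt)^{-1}$, computing the residues explicitly via the factorisation $1-w_n/w_m=(1-q^{n-m})(1-q^{n+m})/(1-q^n)^2$, and reading off the $k<0$ cases from the order-$N$ vanishing at $t=\infty$ — which neatly avoids contour estimates. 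Your residue/telescoping computation is in fact precisely the technique the paper deploys for the odd analogue in the proof of \cref{prop:qidentodd} (see \cref{qidodd}), so your proof brings the even case into line with the paper's treatment of the super case and is self-contained, at the cost of one technical step you assert but do not carry out: the uniform bound on the truncated residues $c_m^{(N)}$ (something like $|c_m^{(N)}|\leq C_q\,q^{m(m-1)/2}$ uniformly in $N\geq m$, obtainable by splitting the product at $n=m$ and estimating logarithms of the tail factors) needed to justify the interchange of the limit $N\to\infty$ with the sum over $m$. That bound is true and routine, so the gap is one of detail rather than of substance.
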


\begin{proof}
The case $k=0$ is equivalent to the following telescoping sum, which converges for $|q| < 1$.
\begin{equation} \label{eq:kequals0}
\sum_{m=1}^\infty(-1)^{m-1}q^{m(m-1)/2}(1+q^m) = (1+q) - (q+q^3) + (q^3+q^6) - (q^6 + q^{10}) + \cdots = 1
\end{equation}

For the case $k > 0$, we begin with the following infinite partial fraction decomposition, which appears in the work of Andrews~\cite[Equation~(2.1)]{AndHec}.
\[
\prod_{m=1}^\infty\frac{(1-q^m)^2}{(1-zq^m) (1-z^{-1}q^{m-1})}=\sum_{m=0}^\infty\frac{(-1)^mq^{m(m+1)/2}}{1-z^{-1}q^m}-z\sum_{m=1}^\infty\frac{(-1)^mq^{m(m+1)/2}}{1-zq^m}
\]
Multiply both sides by $1-z^{-1}$ to obtain
\begin{align*}
\prod_{m=1}^\infty\frac{(1-q^m)^2}{(1-zq^m)(1-z^{-1}q^m)}&=1+(1-z^{-1})\sum_{m=1}^\infty(-1)^mq^{m(m+1)/2} \left( \frac{1}{1-z^{-1}q^m}-\frac{z}{1-zq^m} \right) \\
&=1+\sum_{m=1}^\infty(-1)^mq^{m(m+1)/2}\frac{(1-z^{-1})(1-z)(1+q^m)}{(1-z^{-1}q^m)(1-zq^m)}\\
&=1+\sum_{m=1}^\infty(-1)^mq^{m(m-1)/2}(1+q^m)\left(1-\frac{(1-q^m)^2}{(1-z^{-1}q^m)(1-zq^m)}\right)\\
&=\sum_{m=1}^\infty(-1)^{m-1}q^{m(m-1)/2}(1+q^m)\frac{(1-q^m)^2}{(1-z^{-1}q^m)(1-zq^m)},
\end{align*}
where the final equality uses \cref{eq:kequals0}. Observe that this expression converges for $1 \leq |z| < |q|^{-1}$. Now change coordinates to $t = i(z^{1/2}-z^{-1/2})$ so that $t^2=2-z-z^{-1}$ to obtain the identity
\begin{equation} \label{qid}
\prod_{m=1}^\infty\frac{(1-q^m)^2}{(1-q^m)^2+t^2q^m} = \sum_{m=1}^\infty(-1)^{m-1}q^{m(m-1)/2}(1+q^m)\frac{(1-q^m)^2}{(1-q^m)^2+t^2q^m}.
\end{equation}
This expression now converges for $t$ away from the poles and in particular for all $|t| < 1-q$.

Equating the coefficients of $t^{2k}$ in the Taylor expansions at $t=0$ of both sides of \cref{qid} produces the desired identity, as we will now see. For the left side of \cref{qid}, consider the Taylor series at $t = 0$ of its logarithm.
\begin{align*}
\log\prod_{m=1}^\infty\frac{(1-q^m)^2}{(1-q^m)^2+t^2q^m} &= -\sum_{m=1}^\infty \log \bigg( 1+\frac{t^2q^m}{(1-q^m)^2} \bigg) \\
&= \sum_{k=1}^\infty (-1)^k\frac{t^{2k}}{k}\sum_{m=1}^\infty \frac{q^{mk}}{(1-q^m)^{2k}} =\sum_{k=1}^\infty (-1)^k\frac{t^{2k}}{k}\zeta_q(2k)
\end{align*}
Now take the exponential of both sides and use \cref{eq:sk} to obtain
\begin{equation} \label{eq:prooflhs}
\prod_{m=1}^\infty\frac{(1-q^m)^2}{(1-q^m)^2+t^2q^m}=\sum_{k=0}^\infty (-1)^k \, s_k(\zeta_q(2), \zeta_q(4), \ldots) \, t^{2k}.
\end{equation}
For the right side of \cref{qid}, the Taylor series at $t=0$ can be calculated termwise, as the factor $q^{m(m-1)/2}$ guarantees the convergence of the partial sums. The result is given by
\begin{equation} \label{eq:proofrhs}
\sum_{k=0}^\infty (-1)^k\sum_{m=1}^\infty (-1)^{m-1}q^{m(m-1)/2}(1+q^m)\frac{q^{mk}}{(1-q^m)^{2k}} \, t^{2k}.
\end{equation}

Equating the $t^{2k}$ coefficients of the Taylor series in \cref{eq:prooflhs,eq:proofrhs} yields the desired result for $k > 0$.

For the case $k < 0$, denote by $F(t)$ the function given by \cref{qid} and integrate $t^{2k-1} F(t)$ over large circles. Using
\begin{equation} \label{contint}
\frac{1}{2\pi i} \oint_{|t|=R} \frac{t^{2k-1}}{1+at^2} \, \dd t = \begin{cases}
(-1)^{k-1}a^{-k}, &|a|^{-1}<R, \\
0, &|a|^{-1}>R,
\end{cases}
\end{equation} 
leads to
\begin{multline*}
\frac{1}{2\pi i} \oint_{|t|=R} t^{2k-1} \, \dd t \sum_{m=1}^\infty(-1)^{m-1}q^{m(m-1)/2}(1+q^m)\frac{(1-q^m)^2}{(1-q^m)^2+t^2q^m} \\
= \sum_{m=1}^N(-1)^{m-1}q^{m(m-1)/2}(1+q^m)\frac{(1-q^m)^{2k}}{q^{mk}}.
\end{multline*}
Here, $N$ is determined by $(q^{m/2}-q^{-m/2})^2<R \Leftrightarrow m \leq N$ and is necessarily finite. The dominated convergence theorem allows us to interchange the integral and the sum. 

Choose $R$ so that each denominator in the product is bounded below by $(1-q)^2$ -- this is possible since $(q^{-(m+1)/2}-q^{(m+1)/2})^2-(q^{-m/2}-q^{m/2})^2>(2m-1)(1-q)^2$. Then uniform estimates of the product on $|t|=R$ give
\[
\lim_{R \to \infty} \frac{1}{2\pi i}\oint_{|t|=R} t^{2k-1} \, \dd t \prod_{m=1}^\infty\frac{(1-q^m)^2}{(1-q^m)^2+t^2q^m} = 0.
\]
Hence, for all $k>0$,
\[
\lim_{N\to\infty} \sum_{m=1}^N (-1)^{m-1} q^{m(m-1)/2} (1+q^m) \frac{(1-q^m)^{2k}}{q^{mk}} = 0,
\]
which corresponds precisely to the desired result in the case $k<0$.
\end{proof}

From the function $H_q(x,y)$ defined in \cref{Hq}, we define
\[
F_{2k+1}(y)=\int_0^\infty \!\! x^{2k+1} H_q(x,y) \, \dd x.
\]
Here, $H_q(x,y)$ is to be understood as a series in $y^2$ -- more precisely,
\[
H_q(x,y) = \cosh \Big(y\frac{\dd}{\dd x} \Big) h_q(x) = \sum_{n=0}^\infty \frac{y^{2n}}{(2n)!} \frac{\dd^{2n}}{\dd x^{2n}} h_q(x),
\]
where 
\[
h_q(x) = \sum_{m=1}^\infty(-1)^{m-1}q^{m(m-1)/2}(1+q^m) \, e^{\frac{1}{2} x (q^{m/2}-q^{-m/2})}.
\]
Integrability of $H_q(x,y)$ in $x$ is guaranteed by the following result.

\begin{proposition} \label{F2k+1}
For each non-negative integer $k$, we have
\[
\frac{F_{2k+1}(y)}{(2k+1)!} = \int_0^\infty \!\! \frac{x^{2k+1}}{(2k+1)!} \, H_q(x,y) \, \dd x = \sum_{n=0}^{k+1} b_n \frac{y^{2k+2-2n}}{(2k+2-2n)!},
\]
where $b_0, b_1, b_2, \ldots \in \bq[\zeta_q(2), \zeta_q(4), \ldots]$ are defined by
\[
\sum_{n=0}^\infty b_nz^{2n} = \exp \bigg(\sum_{j=1}^\infty\frac{\zeta_q(2j)}{j} (4z^2)^j \bigg) = 1+4\zeta_q(2)z^2+8(\zeta_q(2)^2+\zeta_q(4))z^4 + \cdots.
\]
\end{proposition}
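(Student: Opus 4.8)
The plan is to first put $H_q(x,y)$ into a clean exponential form, integrate termwise against $x^{2k+1}$, and then recognise the resulting coefficient sums as the quantities controlled by \cref{prop:qident}. Writing $\lambda_m = \tfrac12(q^{m/2}-q^{-m/2})$ and $c_m = (-1)^{m-1}q^{m(m-1)/2}(1+q^m)$, a direct comparison with \cref{Hq} (matching $q^{m^2/2}(q^{m/2}+q^{-m/2}) = q^{m(m-1)/2}+q^{m(m+1)/2} = q^{m(m-1)/2}(1+q^m)$ and collapsing the two exponentials into a $\cosh$) shows that
\[
H_q(x,y) = \sum_{m=1}^\infty c_m \cosh(\lambda_m y)\, e^{\lambda_m x}.
\]
Since $0<q<1$ forces $\lambda_m<0$, each summand decays in $x$, and the factor $q^{m(m-1)/2}$ ensures that, after expanding $H_q$ as a series in $y^2$ as in the statement, every coefficient of $y^{2n}$ is an absolutely convergent series in $m$.

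First I would compute, for each $m$, the elementary integral $\int_0^\infty x^{2k+1} e^{\lambda_m x}\,\dd x = (2k+1)!\,\lambda_m^{-(2k+2)}$, valid because $\lambda_m<0$ and $2k+2$ is even. Expanding $\cosh(\lambda_m y) = \sum_{n\geq 0} \lambda_m^{2n} y^{2n}/(2n)!$ and integrating termwise then gives
\[
\frac{F_{2k+1}(y)}{(2k+1)!} = \sum_{n=0}^\infty \frac{y^{2n}}{(2n)!}\, S_{2n-2k-2}, \qquad S_j := \sum_{m=1}^\infty c_m \lambda_m^{\,j}.
\]
The interchange of $\sum_m$, $\int_0^\infty$ and the $\cosh$-expansion is justified coefficientwise in $y$: for each fixed $n$ the series $\sum_m |c_m|\,|\lambda_m|^{2n-2k-2}$ converges, because the super-exponential decay $|c_m| \leq q^{m(m-1)/2}$ dominates the at-most single-exponential factor $|\lambda_m|^{2n-2k-2} = O(q^{-Cm})$ coming from $|\lambda_m| \leq \tfrac12 q^{-m/2}$.

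The crux is then to evaluate the moment sums $S_j$. Using $\lambda_m^2 = (1-q^m)^2/(4q^m)$, I would rewrite, for any $j\in\bz$,
\[
S_{-2j} = 4^{\,j}\sum_{m=1}^\infty c_m \frac{q^{mj}}{(1-q^m)^{2j}} = 4^{\,j}\, s_j\big(\zeta_q(2),\zeta_q(4),\ldots\big),
\]
which is exactly \cref{qident}. Two consequences drive the result: $S_0 = s_0 = 1$, and $S_{2p} = 4^{-p}\, s_{-p} = 0$ for every $p\geq 1$, since $s_k$ vanishes for $k<0$. This vanishing is what forces the $y$-series above to truncate: only the terms $n=0,1,\ldots,k+1$ survive, so $F_{2k+1}(y)/(2k+1)!$ is a polynomial of degree $2k+2$ in $y$. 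Reindexing by $n \mapsto k+1-n$, and using $S_{2n-2k-2}=S_{-2(k+1-n)}=4^{k+1-n}s_{k+1-n}$, yields the claimed shape with $b_n = 4^n\, s_n(\zeta_q(2),\zeta_q(4),\ldots)$.

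Finally I would confirm that this $b_n$ matches its stated generating function: substituting $t = 4z^2$ into \cref{eq:sk} with $p_m = \zeta_q(2m)$ gives $\sum_n 4^n s_n z^{2n} = \exp\big(\sum_j \tfrac{\zeta_q(2j)}{j}(4z^2)^j\big)$ as required, and the low-order values $b_0=1$, $b_1 = 4\zeta_q(2)$, $b_2 = 8(\zeta_q(2)^2+\zeta_q(4))$ agree with those listed. The main obstacle is not any single manipulation but the truncation step: the fact that $F_{2k+1}$ is a genuine \emph{polynomial} rather than a formal power series in $y$ hinges entirely on the vanishing $S_{2p}=0$ for $p\geq 1$, i.e.\ on the delicate $k<0$ case of \cref{prop:qident}, which in turn rests on its contour-integral argument.
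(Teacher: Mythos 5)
Your proof is correct and follows essentially the same route as the paper: expand $H_q$ in powers of $y$, integrate termwise to reduce everything to the moment sums $\sum_m c_m\lambda_m^{-2j}=4^j s_j$, and invoke \cref{prop:qident} (crucially its $k<0$ case) to force the truncation at $n=k+1$. The only difference is presentational — you justify the interchange of sum and integral by absolute convergence of the moment series (Tonelli), where the paper runs a dominated convergence argument with an explicit comparison function — and this changes nothing of substance.
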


\begin{proof}
Integrability of $x^{2k+1}H_q(x,y)$ is a consequence of integrability of $x^{2k+1} \frac{\dd^{2n}}{\dd x^{2n}} h_q(x)$ for all $n \geq 0$, together with vanishing of the integrals for $n>k+1$.

We first prove integrability in the $n=0$ case. Consider the following inequality, which is satisfied for $0<q<1$ and any positive integer $m$.
\begin{equation} \label{qmcomp} 
q^{-m/2}-q^{m/2}=q^{-m/2}(1+q+q^2+ \cdots +q^{m-1})(1-q)> (q^{-m/2}+m-1)(1-q)>m(1-q)
\end{equation}
Here, the first inequality uses $q^{-m/2+k}+q^{m/2-k}>2$ for $k=1,2, \ldots, m-1$. A consequence of \cref{qmcomp} is the bound
\[
|h_q(x)| < \sum_{m=1}^\infty q^{m(m-1)/2}(1+q^m) \, e^{\frac{1}{2} x (q^{m/2}-q^{-m/2})} < 2\sum_{m=1}^\infty e^{-\frac{1}{2}xm(1-q)} = \frac{2}{e^{\frac{1}{2}(1-q)x}-1}.
\]
We know that $\frac{2x^{2k+1}}{e^{\frac{1}{2}(1-q)x}-1}$ is integrable for each $k\geq 0$. Furthermore, \cref{qmcomp} allows us to deduce pointwise convergence of the following partial sums, via upper bounds on the tail of the series.
\begin{multline*}
\lim_{N\to\infty} \sum_{m=1}^N (-1)^{m-1}q^{m(m-1)/2}(1+q^m) \, e^{\frac{x}{2(1-q)} (q^{m/2}-q^{-m/2})} \\
= \sum_{m=1}^\infty (-1)^{m-1}q^{m(m-1)/2}(1+q^m) \, e^{\frac{x}{2(1-q)} (q^{m2/}-q^{-m/2})}
\end{multline*}
By the dominated convergence theorem, $x^{2k+1}h_q(x)$ is Lebesgue integrable in $x$, and we can interchange the sum and integral to obtain the following for $0<q<1$.
\begin{align*} 
\int_0^\infty \!\! \frac{x^{2k+1}}{(2k+1)!} \, h_q(x) \, \dd x &= \sum_{m=1}^\infty (-1)^{m-1}q^{m(m-1)/2}(1+q^m) \int_0^\infty \!\! \frac{x^{2k+1}}{(2k+1)!} \, e^{\frac{1}{2}x (q^{m/2}-q^{-m/2})} \, \dd x \\
&=\sum_{m=1}^\infty (-1)^{m-1}q^{m(m-1)/2}(1+q^m)\frac{ 2^{2k+2}}{(q^{m/2}-q^{-m/2})^{2k+2}} \\
&=2^{2k+2} \, s_{k+1}\left(\zeta_q(2), \zeta_q(4), \ldots \right)
\end{align*} 
The last equality uses the $q$-series identity of \cref{qident}, where $s_k$ is the polynomial defined by \cref{eq:sk}, which leads to the definition of $b_n$ in the statement of the proposition.

When $n>0$, the proof of integrability of $x^{2k+1} \frac{\dd^{2n}}{\dd x^{2n}} h_q(x)$ is similar to the $n=0$ case. The series is a sum of the first $2n$ terms, which is integrable by the inequality given in \cref{qmcomp}.
\begin{align*}
2^{2n} \frac{\dd^{2n}}{\dd x^{2n}} h_q^{(2n)}(x) &= \sum_{m=1}^{2n} q^{m(m-1)/2}(1+q^m)\left(q^{m/2}-q^{-m/2}\right)^{2n}e^{\frac{x}{2} (q^{m/2}-q^{-m/2})} \\
&< 2(1-q)^{2n}\sum_{m=1}^{2n} m^{2n}e^{-m/2(1-q)x}
\end{align*}
The tail is dominated by an integrable function, after multiplication by $x^{2k+1}$.
\[
\sum_{m=2n+1}^\infty q^{m(m-1)/2}(1+q^m) (q^{m/2}-q^{-m/2})^{2n}e^{\frac{x}{2}\left(q^{m/2}-q^{-m/2}\right)} < 2(1-q)^{2n}\sum_{m=1}^\infty e^{-m/2(1-q)x}=\frac{2}{e^{\frac{1}{2}(1-q)x}-1}
\]
The condition $m>2n$ ensures that $m(m-1)/2-mn\geq0$ so that $q^{m(m-1)/2}(1+q^m)\left(q^{m/2}-q^{-m/2}\right)^{2n}<2$. Hence, $\left| x^{2k+1} \frac{\dd^{2n}}{\dd x^{2n}} h_q(x) \right|$ is bounded above by an integrable function.
The same comparison on the tail proves pointwise convergence for the series. So as in the $n=0$ case, by the dominated convergence theorem, $x^{2k+1} \frac{\dd^{2n}}{\dd x^{2n}} h_q(x)$ is Lebesgue integrable in $x$ and we can interchange the sum and integral. The integral can be obtained from the $n=0$ case via integration by parts or it can be calculated as follows.
\begin{align*} 
& \int_0^\infty \!\! \frac{x^{2k+1}}{(2k+1)!} \frac{\dd^{2n}}{\dd x^{2n}} h_q(x) \, \dd x \\
={}& \sum_{m=1}^\infty (-1)^{m-1}q^{m(m-1)/2}(1+q^m)\tfrac{1}{2^{2n}} (q^{m/2}-q^{-m/2})^{2n} \int_0^\infty \!\! \frac{x^{2k+1}}{(2k+1)!}e^{\frac{x}{2}(q^{m/2}-q^{-m/2})} \, \dd x \\
={}& \sum_{m=1}^\infty (-1)^{m-1}q^{m(m-1)/2}(1+q^m)\frac{ 2^{2k+2-2n}}{(q^{m/2}-q^{-m/2})^{2k+2-2n}} \\
={}& 2^{2k+2-2n} \, s_{k+1-n}\left(\zeta_q(2), \zeta_q(4), \ldots \right).
\end{align*}

When $n>k+1$, the $q$-series identity of \cref{qident} implies the vanishing
\[
\sum_{m=1}^\infty (-1)^{m-1}q^{m(m-1)/2}(1+q^m)(q^{m/2}-q^{-m/2})^{2n-2k-2} = 0.
\]

Putting this all together, we have
\begin{align*}
\int_0^\infty \!\! \frac{x^{2k+1}}{(2k+1)!} \, H(x,y) \, \dd x &= \frac{1}{2}\sum_{m=1}^\infty\sum_{n=0}^{k+1}(-1)^{m-1}q^{m(m-1)/2}(1+q^m) \frac{\left(q^{m/2}-q^{-m/2}\right)^{2n}}{(2n)! \, (q^{-m/2}-q^{m/2})^{2k+2}} \, y^{2n} \\
&= \frac{1}{2} \sum_{i=0}^{k+1}\sum_{m=1}^\infty(-1)^{m-1}q^{m(m-1)/2}(1+q^m)
\frac{q^{mi}}{(1-q^m)^{2i}}\frac{y^{2k+2-2i}}{(2k+2-2i)!} \\
&= \frac{1}{2} \sum_{i=0}^{k+1} s_k ( \zeta_q(2), \zeta_q(4), \ldots) \, \frac{y^{2k+2-2i}}{(2k+2-2i)!}.
\end{align*}
Equivalently, $F_{2k+1}(y)$ is a polynomial in $y$ given by
\[
\frac{F_{2k+1}(y)}{(2k+1)!}=\sum_{n=0}^{k+1} b_n\frac{y^{2k+2-2n}}{(2k+2-2n)!}. \qedhere
\]
\end{proof}

\begin{proposition} \label{sym}
The quantity $V_{g,n}(L_1, \ldots, L_n)$ produced by the recursion of \cref{qvolrec} is a symmetric polynomial in $L_1^2, \ldots, L_n^2$.
\end{proposition}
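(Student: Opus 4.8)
The plan is to argue by strong induction on $2g-2+n$, establishing simultaneously that $V_{g,n}$ is a polynomial, that it is even in each variable separately (hence lies in $\bq[[q]][L_1^2, \ldots, L_n^2]$), and that it is symmetric. The base cases $V_{0,3} = 1$ and $V_{1,1}(L) = \tfrac{1}{48}L^2 + \tfrac{1}{2}\zeta_q(2)$ have the required form, so I would fix $(g,n)$ with $2g-2+n \geq 2$ and assume the statement for all smaller values; note that every factor $V_{g',n'}$ on the right of \cref{qvolrec} indeed has $2g'-2+n' < 2g-2+n$, so the induction is well-founded. For polynomiality and evenness, the key is to reduce the integrals in the recursion to the functions $F_{2k+1}$ of \cref{F2k+1}. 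Since, by the inductive hypothesis, each subfactor is a polynomial in the squares of its arguments, it suffices to evaluate the integrals $\int_0^\infty x^{2k+1} R_q(L_1, L_j, x) \, \dd x$ and $\int_0^\infty\!\int_0^\infty x^{2k+1} y^{2l+1} D_q(L_1, x, y) \, \dd x \, \dd y$. Writing $R_q$ and $D_q$ as integrals over $t \in [0, L_1]$ of $H_q$ (using their defining derivatives and $D_q(0,\cdot,\cdot) = R_q(0,\cdot,\cdot)=0$) and interchanging the order of integration, the first becomes
\[
\int_0^\infty \!\! x^{2k+1} R_q(L_1, L_j, x) \, \dd x = \int_0^{L_1} \!\! \tfrac{1}{2} \big( F_{2k+1}(t+L_j) + F_{2k+1}(t-L_j) \big) \, \dd t ,
\]
while the substitution $u = x+y$ together with a Beta integral turns the second into a rational multiple of $\int_0^{L_1} F_{2k+2l+3}(t) \, \dd t$. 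Both are polynomials by \cref{F2k+1}, so $L_1 V_{g,n}$ is a polynomial.

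To handle parity, I would observe that $\partial_{L_1} D_q(L_1, x, y) = H_q(x+y, L_1)$ and $\partial_{L_1} R_q(L_1, L_j, x) = \tfrac{1}{2}(H_q(x, L_1+L_j) + H_q(x, L_1-L_j))$ are both even in $L_1$, since $H_q(\cdot, w)$ depends only on $w^2$; with the vanishing initial conditions this makes both kernels odd in $L_1$. Consequently the entire right-hand side of \cref{qvolrec} is odd in $L_1$, so $L_1 V_{g,n}$ is divisible by $L_1$ and $V_{g,n}$ is even in $L_1$. The same evenness of $H_q$ in its second slot shows $R_q(L_1, L_j, x)$ is even in $L_j$, which (with the inductive evenness of the subfactors) yields evenness in $L_2, \ldots, L_n$ directly. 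Symmetry in $L_2, \ldots, L_n$ is then manifest: the sum over $j$ in the first line of \cref{qvolrec} and the sum over splittings $I \sqcup J = K$ in the second treat these labels interchangeably, and the subfactors are symmetric by induction.

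The genuine content, and the step I expect to be the main obstacle, is symmetry under exchanging $L_1$ with another boundary, say $L_2$, since the recursion singles out the first boundary. Here I would follow the strategy familiar from Mirzakhani's recursion: form $L_1 L_2 V_{g,n}$, apply the recursion once to resolve the boundary $L_1$, and then apply the recursion a second time to each subfactor carrying the label $2$, so that both distinguished boundaries are resolved at once. After relabelling and collecting, the terms organise into the usual classes (a single pair of pants meeting both distinguished boundaries, versus the two boundaries landing on distinct building blocks), and the aim is to verify that the total is invariant under $1 \leftrightarrow 2$. The one point that does not follow from bookkeeping is a $q$-analogue of Mirzakhani's kernel identity among $H_q$, $D_q$ and $R_q$ — an identity encoding the symmetry of the relevant double integral of $H_q$ against the product kernels — and checking this identity for the explicit series \cref{Hq} is where the real work lies. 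I expect that once this kernel identity is established (most plausibly by manipulations parallel to those in the proof of \cref{prop:qident}), the matching of the cross-terms, and hence full symmetry, follows by the same combinatorial reorganisation as in the classical case.
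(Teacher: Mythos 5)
Your treatment of polynomiality and parity is correct and is essentially the paper's own argument: reduce every integral in \cref{qvolrec} to the functions $F_{2k+1}$ via the defining relations for $\partial_x D_q$ and $\partial_x R_q$ together with the vanishing at $x=0$ (your Beta-integral computation of the double integral matches the paper's change of variables $y=\tfrac12(u+v)$, $z=\tfrac12(u-v)$), invoke \cref{F2k+1} for polynomiality, and use the oddness of the kernels in their first slot and evenness in the remaining slots to conclude that $V_{g,n}\in\bq[[q]][L_1^2,\ldots,L_n^2]$ and that it is symmetric in $L_2,\ldots,L_n$.

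For the symmetry under $L_1\leftrightarrow L_2$, however, your proposal contains a genuine gap: the entire argument is deferred to an unspecified ``$q$-analogue of Mirzakhani's kernel identity'' obtained by applying the recursion twice, which you neither state precisely nor verify. As written this is the statement of a plan, not a proof, and it is the whole content of the hardest part of the proposition; it is also not evident that the manipulations of \cref{prop:qident} would produce such an identity. The paper avoids any kernel identity by a parity trick that uses only facts you have already established. Write the recursion once with $L_1$ distinguished and once with $L_2$ distinguished, and add, obtaining an expression for $L_1V_{g,n}(L_1,L_2,\ldots)+L_2V_{g,n}(L_2,L_1,\ldots)$ that is manifestly symmetric under $L_1\leftrightarrow L_2$. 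Now specialise $L_2=-L_1$: every pair of corresponding integrands cancels, because $R_q(-L_1,L_j,x)=-R_q(L_1,L_j,x)$ and $D_q(-L_1,x,y)=-D_q(L_1,x,y)$ while the lower-order volumes appearing as factors are even in each variable by induction. Hence the symmetric sum vanishes on $L_2=-L_1$, so it is divisible by $L_1+L_2$ with a quotient symmetric in $L_1,L_2$, and together with evenness this yields the symmetry of $V_{g,n}$ in $L_1$ and $L_2$. So the missing ingredient is not a new $q$-series computation but this cancellation-and-divisibility argument; until either it or your kernel identity is actually carried out, the proof of symmetry is incomplete.
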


\begin{proof}
The kernel $D_q(x,y,z)$ in the recursion of \cref{qvolrec} can be written explicitly as
\[
D_q(x,y,z) = \sum_{m=1}^\infty (-1)^{m-1} \frac{q^{m(m-1)/2}(1+q^m)}{q^{m/2}-q^{-m/2}}\Big( e^{\frac{x+y+z}{2} [m]_q} - e^{\frac{-x+y+z}{2} [m]_q} \Big),
\]
where we interpret it as a series in $x$ and introduce the shorthand $[m]_q = q^{m/2}-q^{-m/2}$. Similarly, the kernel $R_q(x,y,z)$ can be written explicitly as
\[
R_q(x,y,z)=\frac{1}{2}\sum_{m=1}^\infty\frac{(-1)^{m-1}q^{m(m-1)/2}(1+q^m)}{q^{m/2}-q^{-m/2}}\Big(e^{\frac{x+y+z}{2} [m]_q}+e^{\frac{x-y+z}{2} [m]_q}-e^{\frac{-x+y+z}{2} [m]_q}-e^{\frac{-x-y+z}{2} [m]_q}\Big),
\]
where we interpret it as a series in $x$ and $y$.

Integrability of $z^{2k+1}R_q(x,y,z)$ in $z$ is equivalent to integrability of $z^{2k+1}\frac{\partial}{\partial x}R_q(x,y,z)$ since it is the coefficients of the series in $x$ and $y$ that we require to be integrable, and their integrability is unaffected by $\frac{\partial}{\partial x}$. Integrability of $z^{2k+1}\frac{\partial}{\partial x}R_q(x,y,z)$ is an immediate consequence of integrability of $x^{2k+1}H_q(x,y)$. Furthermore,
\[
\int_0^\infty \!\! z^{2k+1}\frac{\partial}{\partial x}R_q(x,y,z) \, \dd z=\frac{1}{2}(F_{2k+1}(x+y)+F_{2k+1}(x-y)),
\]
from which we can immediately retrieve $\int_0^\infty \! z^{2k+1}R_q(x,y,z) \, \dd z$ since the polynomial in $x$ stores coefficients and the linear operator $\frac{\partial}{\partial x}$ does not actually differentiate under the integral sign.

Integrability of $y^{2i+1}z^{2j+1}D_q(x,y,z)$ in $y$ and $z$ is equivalent to integrability of $y^{2i+1}z^{2j+1}\frac{\partial}{\partial x}D_q(x,y,z)$, again since it is the coefficients of the series in $x$ that we require to be integrable. The derivative $\frac{\partial}{\partial x}$
cancels the $q^{m/2}-q^{-m/2}$ denominator, leaving the same comparison integrable function as used for $H_q(x,y)$ to apply the dominated convergence theorem and deduce integrability. Furthermore, Fubini's theorem allows one to use the change of coordinates $y=\frac{1}{2}(u+v)$, $z=\frac{1}{2}(u-v)$ to prove
\begin{align*}
\int_0^\infty \!\! \int_0^\infty \!\! y^{2j+1} z^{2j+1} \frac{\partial}{\partial x} D_q(x,y,z) \, \dd y \, \dd z &= \int_0^\infty \!\! \int_0^\infty \!\! y^{2i+1} z^{2j+1} H(y+z,x) \, \dd y \, \dd z \\
&= \frac{(2i+1)! \, (2j+1)!}{(2i+2j+3)!} \, F_{2i+2j+3}(x).
\end{align*}
Again, we can immediately retrieve $\int_0^\infty \! \int_0^\infty \! y^{2j+1} z^{2k+1} D_q(x,y,z) \, \dd y \, \dd z$ since $\frac{\partial}{\partial x}$ does not differentiate under the integral sign.

We have proven that the recursion of \cref{qvolrec} is integrable and defines a polynomial $L_1V_{g,n}(L_1, \ldots, L_n)$, by the inductive assumption that all $V_{g',n'}$ generated earlier by the recursion are polynomial. Since we have $D_q(0,y,z)=R_q(0,y,z) = 0$, the polynomial is divisible by $L_1$, proving that $V_{g,n}(L_1, \ldots, L_n)$ is also polynomial. In fact, $V_{g,n}(L_1, \ldots, L_n)$ is polynomial in $L_1^2$, since the integrals used to calculate it are linear combinations of $F_1(L_1), F_2(L_1), \ldots$, which are inherently polynomial in $L_1^2$. Furthermore, $V_{g,n}(L_1, \ldots, L_n)$ is polynomial in $L_j^2$ for $j>1$, by induction.

To prove that $V_{g,n}(L_1, \ldots, L_n)$ is symmetric, we use the following fact: for any polynomial $P(L_1, \ldots, L_n)$, if $P(L_1,L_2, \ldots, L_n)+P(L_2,L_1, \ldots, L_n)=(L_1+L_2) \, Q(L_1, \ldots, L_n)$ for a polynomial $Q$, then $Q$ is symmetric in $L_1$ and $L_2$. Furthermore, it is enough to prove that $P(L_1,-L_1,L_3, \ldots, L_n)+P(-L_1,L_1,L_3, \ldots, L_n)=0$.

Apply the recursion of \cref{qvolrec} with each of $L_1$ and $L_2$ as the distinguished argument, and add to obtain the following polynomial symmetric in $L_1$ and $L_2$.
\begin{align} \label{qvolrec12}
L_1V_{g,n}(L_1,L_2, \ldots, L_n)+L_2V_{g,n}(L_2,L_1, \ldots, L_n)=\int_0^\infty \!\! (\,\cdot\,) \, \dd x +\int_0^\infty \!\! \int_0^\infty \!\! (\,\cdot\,) \, \dd x \, \dd y
\end{align}
Now set $L_2=-L_1$ in the right side of \cref{qvolrec12}. For $j\neq1,2$ in the first term on the right side of \cref{qvolrec}, the integrands cancel
\[
R_q(L_1,L_j,x)V_{g,n-1}(x,-L_1, \LL_{K \setminus \{2, j\}})+R_q(-L_1,L_j,x)V_{g,n-1}(x,L_1, \LL_{K \setminus \{2,j\}})=0,
\]
since $V_{g,n-1}(x,-L_1, \ldots)=V_{g,n-1}(x,L_1, \ldots)$ and $R_q(-L_1,L_j,x)=-R_q(L_1,L_j,x)$.

For $j=1$ or $2$, the integrands also cancel
\[
R_q(L_1,L_j,x)V_{g,n-1}(x,L_3, \ldots, L_n)+R_q(-L_1,L_j,x)V_{g,n-1}(x,L_3, \ldots, L_n)=0,
\]
since $R_q(-L_1,L_j,x)=-R_q(L_1,L_j,x)$.

Similarly, in the remaining terms on the right side of \cref{qvolrec}, the integrands cancel under $L_2=-L_1$, since 
\[
D_q(L_1,x,y)P(L_2^2)+D_q(L_2,x,y)P(L_1^2) \stackrel{L_2=-L_1}{\longrightarrow} \big( D_q(L_1,x,y)+D_q(-L_1,x,y) \big) P(L_1^2)=0,
\]
where $P(L_2^2)$ is a polynomial obtained from simpler volumes and $D_q(-x,y,z)=-D_q(x,y,z)$. Thus, the expression in \cref{qvolrec12} vanishes and we conclude that $V_{g,n}(L_1,L_2, \ldots, L_n)$ is symmetric in $L_1$ and $L_2$. By induction, $V_{g,n}(L_1,L_2, \ldots, L_n)$ is symmetric in $L_2, \ldots, L_n$ and hence, it is symmetric in $L_1,L_2, \ldots, L_n$, as claimed.
\end{proof}

To prove that the rescaled $q \to 1$ limit of $V_{g,n}(L_1, \ldots, L_n)$ gives the Weil--Petersson volumes, we show that the recursion of \cref{qvolrec} tends to the following recursion of Mirzakhani for Weil--Petersson volumes.

\begin{theorem}[Mirzakhani~\cite{MirSim}] \label{th:mirzvolrec}
The Weil--Petersson volumes satisfy the following recursion for $2g-2+n \geq 2$.
\begin{align}
L_1&V^{\mathrm{WP}}_{g,n}(L_1, \LL_K) = \sum_{j=2}^n\int_0^\infty \!\! xR(L_1,L_j,x) \, V^{\mathrm{WP}}_{g,n-1}(x, \LL_{K \setminus \{j\}}) \, \dd x \notag \\
&+\frac{1}{2}\int_0^\infty \!\! \int_0^\infty \!\! xyD(L_1,x,y) \Big[ V^{\mathrm{WP}}_{g-1,n+1}(x,y, \LL_K) + \mathop{\sum_{g_1+g_2=g}}_{I \sqcup J = K} V^{\mathrm{WP}}_{g_1,|I|+1}(x, \LL_I) \, V^{\mathrm{WP}}_{g_2,|J|+1}(y, \LL_J) \Big] \, \dd x \, \dd y \label{volrecWP}
\end{align}
Here, we use the notation $K = \{2, 3, \ldots, n\}$ and write $\LL_I = (L_{i_1}, L_{i_2}, \ldots, L_{i_m})$ for $I = \{i_1, i_2, \ldots, i_m\}$.
\end{theorem}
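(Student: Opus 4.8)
The plan is to reconstruct Mirzakhani's original strategy, which rests on two geometric pillars: a generalisation of McShane's identity to bordered hyperbolic surfaces, and an unfolding technique for integrating a sum over a mapping class group orbit against the Weil--Petersson form. Fix a hyperbolic surface $X$ of genus $g$ with geodesic boundary components $\beta_1, \ldots, \beta_n$ of lengths $L_1, \ldots, L_n$. The first step is to establish the generalised McShane identity, which expresses the distinguished boundary length as an absolutely convergent sum over embedded pairs of pants having $\beta_1$ as one boundary component:
\[
L_1 = \sum_{\{\gamma_1, \gamma_2\}} D\big(L_1, \ell_{\gamma_1}(X), \ell_{\gamma_2}(X)\big) + \sum_{j=2}^n \sum_{\gamma} R\big(L_1, L_j, \ell_\gamma(X)\big),
\]
where the first sum runs over unordered pairs of interior simple closed geodesics that, together with $\beta_1$, bound an embedded pair of pants, and the second runs over simple closed geodesics $\gamma$ bounding a pair of pants with $\beta_1$ and $\beta_j$. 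The kernels $D$ and $R$ appearing in \cref{volrecWP} are exactly the functions produced here by a trigonometric analysis of the hyperbolic pair of pants, measuring the total length along $\beta_1$ occupied by points whose orthogonal geodesic ray spirals into a prescribed configuration.

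With the identity in hand, the second step is to integrate both sides over $\modm_{g,n}(L_1, \ldots, L_n)$ against $\exp \omega^{\mathrm{WP}}(L_1, \ldots, L_n)$. The left side contributes $L_1 \, V^{\mathrm{WP}}_{g,n}(L_1, \ldots, L_n)$. Each sum on the right is a sum over a single mapping class group orbit of a topological type of multicurve, so I would apply Mirzakhani's integration formula: for a simple closed multicurve $\gamma$ whose complement has components of fixed topological type,
\[
\int_{\modm_{g,n}(L)} \sum_{[\alpha] \in \mathrm{Mod} \cdot \gamma} f(\ell_\alpha) \, dX = C_\gamma \int_{\br_{>0}^k} f(x) \, \prod_i V^{\mathrm{WP}}(\text{pieces}; x) \, x_1 \cdots x_k \, dx .
\]
The weight $x_1 \cdots x_k$ and the product of volumes of the cut pieces come from Wolpert's theorem that Fenchel--Nielsen length--twist coordinates are Darboux for $\omega^{\mathrm{WP}}$, so that cutting along $\gamma$ decomposes the form as $\bigwedge_i d\ell_i \wedge d\tau_i$ times the forms on the complementary pieces, with the twist directions integrating out to the length factors.

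Applying this to each curve family in the McShane identity produces precisely the terms of \cref{volrecWP}. Cutting along a pair $\{\gamma_1,\gamma_2\}$ bounding a pair of pants with $\beta_1$ removes that pants and leaves either a connected surface of genus $g-1$ with two new boundaries of lengths $x,y$ (yielding the $V^{\mathrm{WP}}_{g-1,n+1}(x,y,\LL_K)$ term) or two disjoint pieces (yielding the $\sum_{g_1+g_2=g,\, I \sqcup J = K} V^{\mathrm{WP}}_{g_1,|I|+1}(x,\LL_I) \, V^{\mathrm{WP}}_{g_2,|J|+1}(y,\LL_J)$ sum), with kernel $D(L_1,x,y)$ and measure $\tfrac12 xy \, dx \, dy$; the $\tfrac12$ is the symmetry factor from exchanging $x \leftrightarrow y$. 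Cutting along a single $\gamma$ bounding a pair of pants with $\beta_1$ and $\beta_j$ reduces to $V^{\mathrm{WP}}_{g,n-1}(x, \LL_{K \setminus \{j\}})$ with kernel $R(L_1,L_j,x)$ and measure $x \, dx$. Summing these contributions recovers the recursion, with polynomiality following inductively from $V^{\mathrm{WP}}_{0,3} = 1$ and the directly computed $V^{\mathrm{WP}}_{1,1}$.

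The hardest part will be making the two geometric inputs rigorous. Proving the generalised McShane identity requires a detailed study of the complete simple geodesics emanating perpendicularly from $\beta_1$: one must show that their union has measure-zero complement in $\beta_1$, that this complement decomposes into intervals indexed by embedded pairs of pants, and then compute each interval length explicitly to obtain the closed forms for $D$ and $R$. Justifying the integration formula requires identifying the intermediate covering of $\modm_{g,n}(L)$ associated to $\gamma$, verifying the measure-theoretic change of variables through Wolpert's symplectic formula, and establishing uniform convergence so that integrating the McShane sum term by term is legitimate. These analytic and symplectic-geometric verifications, rather than the bookkeeping of which cut produces which term, are the genuine obstacle.
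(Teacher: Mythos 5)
The paper offers no proof of this statement: it is imported verbatim from Mirzakhani's work \cite{MirSim}, so there is no internal argument to compare against. Your outline is a faithful reconstruction of Mirzakhani's actual proof -- the generalised McShane identity over embedded pairs of pants containing $\beta_1$, followed by the unfolding/integration formula justified by Wolpert's Darboux theorem for Fenchel--Nielsen coordinates -- and the bookkeeping you describe (the $\tfrac{1}{2}$ as the symmetry factor for the unordered pair $\{\gamma_1,\gamma_2\}$, the split into the connected $V^{\mathrm{WP}}_{g-1,n+1}$ term and the disconnected sum, the $R$-kernel terms from pants containing $\beta_1$ and $\beta_j$) is correct and matches how the terms of \cref{volrecWP} arise. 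As a proof, however, this is a plan rather than an argument: the two pillars you rely on are each substantial theorems whose proofs you defer, and you say so yourself. Concretely, what remains is (i) the measure-theoretic analysis of the Birman--Series-type set of simple geodesics perpendicular to $\beta_1$ and the explicit computation of the interval lengths giving $D$ and $R$; (ii) the construction of the intermediate covers of $\modm_{g,n}(L)$ associated to each multicurve orbit and the verification that the pushforward measure is $x_1\cdots x_k\,\dd x$ times the product of volumes of the cut pieces; and (iii) the justification for integrating the McShane sum term by term. One further point worth flagging: the recursion as stated holds only for $2g-2+n\geq 2$, and the excluded case $(g,n)=(1,1)$ is not ``directly computed'' in the same way -- the naive application of the $D$-term overcounts by a factor of $2$ because of the elliptic involution, which is why the paper's \cref{eq:v11wp} carries the prefactor $\tfrac{1}{2L}$; your sketch should acknowledge that this base case requires a separate normalisation rather than falling out of the general unfolding.
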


The kernels in \cref{volrecWP} are defined by
\[
\frac{\partial}{\partial x}D(x,y,z)=H(x,y+z) \qquad \text{and} \qquad \frac{\partial}{\partial x}R(x,y,z) = \frac{1}{2} \big( H(z,x+y)+H(z,x-y) \big)
\]
and the initial conditions $D(0,y,z)=R(0,y,z)=0$, from the function
\[
H(x,y)=\frac{1}{1+e^{\frac{x+y}{2}}}+\frac{1}{1+e^{\frac{x-y}{2}}}.
\]
More explicitly, we have
\[
D(x,y,z) = 2\log \bigg( \frac{e^{\frac{x}{2}}+e^{\frac{y+z}{2}}}{e^{-\frac{x}{2}}+e^{\frac{y+z}{2}}} \bigg) \qquad \text{and} \qquad R(x,y,z) = x - \log \bigg( \frac{\cosh\frac{y}{2}+\cosh\frac{x+z}{2}}{\cosh\frac{y}{2}+\cosh\frac{x-z}{2}} \bigg).
\]
The following limits relate Mirzakhani's kernels to the $q$-kernels introduced in \cref{sec:introduction}.
\begin{align} \label{qkerlim}
\lim_{q\to 1} H_q \Big( \frac{x}{1-q}, \frac{y}{1-q} \Big) &= H(x,y) \notag \\
\lim_{q\to 1} (1-q) \, D_q \Big( \frac{x}{1-q}, \frac{y}{1-q}, \frac{z}{1-q} \Big) &= D(x,y,z) \\
\lim_{q\to 1} (1-q) \, R_q \Big( \frac{x}{1-q}, \frac{y}{1-q}, \frac{z}{1-q} \Big) &= R(x,y,z) \notag
\end{align}

The recursion of \cref{volrecWP} uniquely determines $V_{g,n}^{\mathrm{WP}}(L_1, \ldots, L_n)$ from the base cases
\begin{align}
V^{\mathrm{WP}}_{0,1}(L) &:= 0 \notag \\
V^{\mathrm{WP}}_{0,2}(L_1, L_2) &:= 0 \notag \\
V^{\mathrm{WP}}_{0,3}(L_1,L_2,L_3) &:= 1 \notag \\
V^{\mathrm{WP}}_{1,1}(L) &:= = \frac{1}{2L} \int_0^\infty \!\! xD(L,x,x) \, \dd x. \label{eq:v11wp}
\end{align}
Calculating the integral of \cref{eq:v11wp} and applying the recursion of \cref{volrecWP} yields the formulas
\begin{align*}
V^{\mathrm{WP}}_{1,1}(L) &= \frac{1}{48}L^2+ \frac{\pi^2}{12} \\
V^{\mathrm{WP}}_{0,4} (L_1, L_2, L_3, L_4) &= \frac{1}{2} \sum_{i=1}^4 L_i^2 + 2\pi^2 \\
V^{\mathrm{WP}}_{1,2}(L_1, L_2) &= \frac{1}{192} (L_1^2+L_2^2)^2 + \frac{\pi^2}{12} (L_1^2+L_2^2) + \frac{\pi^4}{4}.
\end{align*}

\begin{proposition} \label{limit}
The polynomials $V_{g,n}(L_1, \ldots, L_n) \in\bq[[q]][L_1^2, \ldots, L_n^2]$ tend to the Weil--Petersson volumes in the following rescaled $q \to 1$ limit.
\[
\lim_{q\to 1} (1-q)^{6g-6+2n} \, V_{g,n} \Big( \frac{L_1}{1-q}, \ldots, \frac{L_n}{1-q} \Big) = V^{\mathrm{WP}}_{g,n}(L_1, \ldots, L_n)
\]
\end{proposition}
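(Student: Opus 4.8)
The plan is to induct on the complexity $2g-2+n$ and to show that, after rescaling, the recursion of \cref{qvolrec} converges to Mirzakhani's recursion of \cref{th:mirzvolrec}; since the latter determines the $V^{\mathrm{WP}}_{g,n}$ uniquely from the base cases, this forces the claimed limit. The base cases are checked directly: $V_{0,1} = V_{0,2} = 0$ and $V_{0,3} = 1 = V^{\mathrm{WP}}_{0,3}$ match trivially, while for $V_{1,1}(L) = \tfrac{1}{48}L^2 + \tfrac12\zeta_q(2)$ one has $(1-q)^2 V_{1,1}\big(\tfrac{L}{1-q}\big) = \tfrac{1}{48}L^2 + \tfrac12 (1-q)^2\zeta_q(2) \to \tfrac{1}{48}L^2 + \tfrac{\pi^2}{12} = V^{\mathrm{WP}}_{1,1}(L)$, using $\lim_{q\to1}(1-q)^2\zeta_q(2)=\zeta(2)=\tfrac{\pi^2}{6}$. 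It therefore suffices to prove that the rescaled polynomials $\widetilde{V}_{g,n}(L_1,\ldots,L_n) := (1-q)^{6g-6+2n}\,V_{g,n}\big(\tfrac{L_1}{1-q},\ldots,\tfrac{L_n}{1-q}\big)$ satisfy \cref{volrecWP} in the limit $q\to1$.

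To obtain a recursion for $\widetilde{V}_{g,n}$, I would substitute $L_i\mapsto\tfrac{L_i}{1-q}$ into \cref{qvolrec} and rescale each integration variable by $\tfrac{1}{1-q}$. A short bookkeeping of the resulting powers of $(1-q)$ — from the Jacobians and monomial prefactors of the rescaled integrals, from the inverse power produced by each kernel via \cref{qkerlim}, and from the scaling $V_{g',n'}(\cdots/(1-q)) = (1-q)^{-(6g'-6+2n')}\widetilde{V}_{g',n'}$ of each volume — shows that every term, including the left-hand side and the contributions of $V_{g,n-1}$, $V_{g-1,n+1}$ and the products $V_{g_1,|I|+1}V_{g_2,|J|+1}$, carries the common factor $(1-q)^{5-6g-2n}$. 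Cancelling it leaves a recursion of exactly the shape of \cref{qvolrec} for $\widetilde{V}_{g,n}$, but with each $V_{g',n'}$ replaced by $\widetilde{V}_{g',n'}$ and with the rescaled kernels $(1-q)R_q\big(\tfrac{L_1}{1-q},\tfrac{L_j}{1-q},\tfrac{u}{1-q}\big)$ and $(1-q)D_q\big(\tfrac{L_1}{1-q},\tfrac{u}{1-q},\tfrac{v}{1-q}\big)$ in place of $R_q$ and $D_q$.

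Now I would let $q\to1$. By \cref{qkerlim} the rescaled kernels converge to Mirzakhani's $R(L_1,L_j,u)$ and $D(L_1,u,v)$, and by the inductive hypothesis each $\widetilde{V}_{g',n'}$ appearing on the right — all of strictly smaller complexity $2g'-2+n'$ — converges to the corresponding $V^{\mathrm{WP}}_{g',n'}$. Assuming the limit may be passed inside the integrals, the rescaled recursion becomes precisely Mirzakhani's recursion \cref{volrecWP} for $\lim_{q\to1}\widetilde{V}_{g,n}$; uniqueness of the solution of \cref{volrecWP} then identifies this limit with $V^{\mathrm{WP}}_{g,n}$, closing the induction.

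The main obstacle is justifying this interchange of limit and integration, since the exponential decay rate of the kernels, of order $e^{-\frac12(1-q)xm}$ coming from \cref{qmcomp}, degenerates as $q\to1$. The rescaling is precisely what repairs this: under $x\mapsto\tfrac{x}{1-q}$ the decay rate becomes $e^{-\frac12 xm}$, uniform in $q$, so the rescaled integrands admit a single integrable majorant and the dominated convergence theorem applies. Alternatively, and more cleanly, one can avoid estimates by working coefficient-by-coefficient: each integral reduces through \cref{F2k+1} to the explicit polynomial $\tfrac{F_{2k+1}(y)}{(2k+1)!} = \sum_n b_n \tfrac{y^{2k+2-2n}}{(2k+2-2n)!}$ with $b_n\in\bq[\zeta_q(2),\zeta_q(4),\ldots]$, so that the only limits to evaluate are those of the $q$-zeta values, governed by $\lim_{q\to1}(1-q)^{2k}\zeta_q(2k)=\zeta(2k)$. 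Tracking the weight of each $\zeta_q(2k)$ in this reduction both confirms the common power $(1-q)^{5-6g-2n}$ and shows that the lower-degree-in-$L$ coefficients converge to the expected rational multiples of powers of $\pi^2$.
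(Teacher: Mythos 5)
Your proposal is correct and, in its essentials, matches the paper's proof: both arguments are an induction on $2g-2+n$ with the same base-case checks, reducing the problem to showing that the rescaled recursion of \cref{qvolrec} degenerates to Mirzakhani's recursion of \cref{volrecWP}; your power count $(1-q)^{5-6g-2n}$ for the common factor is right. The one point worth flagging is how the limit is actually taken inside the integrals. The paper never confronts the dominated-convergence question that occupies your fourth paragraph: it works exclusively through the closed-form polynomials of \cref{F2k+1}, so the entire analytic content of the limit is the single statement $\lim_{q\to 1}(1-q)^{2k+2}F_{2k+1}\big(\tfrac{y}{1-q}\big)=F^M_{2k+1}(y)$, which is proved by matching the generating function $\sum b_n z^{2n}=\exp\big(\sum_j \tfrac{\zeta_q(2j)}{j}(4z^2)^j\big)$ against Mirzakhani's $\sum b^M_n z^{2n}=\tfrac{2\pi z}{\sin(2\pi z)}$ via the Weierstrass product for $\sin(2\pi z)/(2\pi z)$. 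This is exactly the ``coefficient-by-coefficient'' alternative you sketch at the end, except that your sketch leaves implicit the one identity that makes it work, namely $\tfrac{2\pi z}{\sin(2\pi z)}=\exp\big(\sum_j\tfrac{\zeta(2j)}{j}(4z^2)^j\big)$, which is what identifies $\lim_{q\to1}(1-q)^{2n}b_n$ with $b^M_n$; without it, knowing $\lim(1-q)^{2k}\zeta_q(2k)=\zeta(2k)$ only tells you the limit of $b_n$, not that it equals Mirzakhani's coefficient. Your primary route via dominated convergence on the rescaled integrands would also work (and the uniform majorant $e^{-\frac12 xm}$ after rescaling is the right observation), but it is more machinery than the paper needs.
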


\begin{proof}
The limit clearly holds in the case $(g,n) = (0,3)$ since $V_{0,3}(L_1,L_2,L_3) = 1$ and $V_{0,3}^{\mathrm{WP}}(L_1,L_2,L_3) = 1$. In the case $(g,n) = (1,1)$, we can calculate explicitly as follows.
\[
\lim_{q \to 1} (1-q)^2 \, V_{1,1} \Big( \frac{L_1}{1-q} \Big) = \lim_{q \to 1} (1-q)^2 \, \Big(\frac{1}{48}\Big(\frac{L_1}{1-q}\Big)^2+\frac{1}{2}\zeta_q(2) \Big) = \frac{1}{48}L_1^2+\frac{1}{2}\zeta(2)=V_{1,1}^{\mathrm{WP}}(L_1)
\]

Using the definition $F^M_{2k+1}(y)=\int_0^\infty \! x^{2k+1} H(x,y) \, \dd x$, Mirzakhani~\cite{MirSim} proved that
\[
\frac{F^M_{2k+1}(y)}{(2k+1)!}=\sum_{n=0}^{k+1}b^M_n \frac{y^{2k+2-2n}}{(2k+2-2n)!}, \qquad \text{where } \sum_{n=0}^\infty b^M_n z^{2n} = \frac{2\pi z}{\sin(2\pi z)}.
\]
By \cref{F2k+1}, we have
\[
\frac{F_{2k+1}(y)}{(2k+1)!}=\sum_{n=0}^{k+1} b_n\frac{y^{2k+2-2n}}{(2k+2-2n)!},\qquad \text{where } \sum_{n=0}^\infty b_n z^{2n} = \exp \bigg( \sum_{j=1}^\infty\frac{\zeta_q(2j)}{j} (4z^2)^j \bigg).
\]
The Weierstrass product
\[
\frac{\sin(2\pi z)}{2\pi z}=\prod_{n=1}^\infty \bigg( 1-\frac{4z^2}{n^2} \bigg)
\]
together with the expansion of $\log(1-t)$ gives the Taylor expansion
\[
\log\left(\frac{\sin(2\pi z)}{2\pi z}\right) = -\sum_{j=1}^\infty \frac{\zeta(2j)}{j} (4z^2)^{j} \qquad \Rightarrow \qquad \frac{2\pi z}{\sin(2\pi z)} = \exp \bigg(\sum_{j=1}^\infty \frac{\zeta(2j)(4z^2)^{j}}{j} \bigg).
\]
This proves that
\[
\lim_{q\to 1}(1-q)^{2k+2} \, F_{2k+1} \Big( \frac{y}{1-q} \Big) = F^M_{2k+1}(y).
\]

The polynomials $F^M_{2k+1}(y)$ uniquely determine the Weil--Petersson volumes $V^{\mathrm{WP}}_{g,n}(L_1, \ldots, L_n)$ from the recursion of \cref{volrecWP}, as shown by Mirzakhani~\cite{MirSim}. Similarly, the polynomials $F_{2k+1}(y)$ uniquely determine the polynomials $V_{g,n}(L_1, \ldots, L_n)$ from the recursion of \cref{qvolrec}, as shown here in the proof of \cref{sym}. We have shown that the rescaled limits of the initial polynomials $V_{0,3}$ and $V_{1,1}$ produce the Weil--Petersson volumes, and that the rescaled limit of the recursion of \cref{qvolrec} reproduces Mirzakhani's recursion. By induction, this proves the proposition.
\end{proof}

The $q$-deformations that we introduce here may possess structure analogous to that of their limits, namely the Weil--Petersson volumes. For example, in previous work of the authors, we prove various properties of Weil--Petersson volumes, such as the fact that $V_{g,1}^{\mathrm{WP}}(2\pi i)=0$~\cite{DNoWei}. It would be interesting to see if this has consequences for the polynomials $V_{g,1}(L)$ defined here.

\section{A \texorpdfstring{$q$}{q}-deformation of super Weil--Petersson volumes} \label{sec3}

In this section, we define a $q$-deformation of the super Weil--Petersson volumes originally defined by Stanford and Witten~\cite{SWiJTG}. A supermanifold is a locally ringed space $\widehat{M}=(M,\co_{\widehat{M}})$ such that for each open set $U \subset M$, $\co_{\widehat{M}}(U)$ is a super-commutative ring that satisfies $\co_{\widehat{M}}(U)\cong \co_M(U) \otimes \Lambda^*(V)$ for some vector space $V$. Via the natural quotient $\co_{\widehat{M}}\to\co_M$ by nilpotent elements, this essentially defines an infinitesimal thickening $M \to \widehat{M}$. Given a vector bundle $E\to M$ defined over a smooth symplectic manifold $(M,\omega)$, the sheaf of smooth sections of the exterior algebra $\Lambda^*E^\vee$ of the dual bundle defines a smooth supermanifold $\widehat{M}$ with reduced space $M$. The super volume of $\widehat{M}$ then reduces to an integral over $M$ given by $\int_M e(E^\vee) \exp(\omega)$.

The moduli space of genus $g$ super curves with $n$ marked points $\widehat{\modm}_{g,n}$ arises in this way -- it is defined smoothly by the sheaf of smooth sections of $\Lambda^*E_{g,n}^\vee$, where $E_{g,n}$ is a natural vector bundle defined over the moduli space $\modm_{g,n}^{\rm spin}$ of spin curves with $n$ marked points.

The moduli space of spin curves is given by
\[
{\modm}_{g,n}^{\text{spin}} = \left\{ (\cc,\theta,p_1,\ldots, p_n,\phi) \,\middle|\, \phi:\theta^2\stackrel{\cong}{\longrightarrow}\omega_{\cc}^{\text{log}} \right\},
\]
where $\cc$ an is orbifold curve with isotropy subgroup $\bz_2$ at the marked points $\{p_1, \ldots, p_n\}$, known as a twisted curve~\cite{AJaMod}, and $\theta$ is a line bundle over $\cc$.

A spin structure on an orbifold curve with marked points $(\cc,p_1,\dots,p_n)$ defines a line bundle $\theta\to\cc$ that is a square root of the log-canonical bundle -- that is, $\theta^2\cong\omega_\cc^{\text{log}}$. Both $\theta$ and $\omega_\cc^{\text{log}}$ are line bundles, or orbifold line bundles, defined over $\cc$. An orbifold line bundle has a well-defined degree which may be a half-integer since the points with non-trivial isotropy can contribute one half to the degree. In particular, we have
\[
\deg\omega_{\cc}^{\text{log}}=2g-2+n \qquad \text{and} \qquad \deg\theta=g-1+\frac{1}{2}n.
\] 
Since $\deg\theta^{\vee}=1-g-\frac{1}{2}n < 0$, the bundle $\theta^{\vee}$ possesses no holomorphic sections and $h^0(\cc,\theta^{\vee})=0$. The index $h^0(\cc,\theta^{\vee})-h^1(\cc,\theta^{\vee})$ is constant over any family, so $h^0(\cc,\theta^{\vee})=0$ implies that $H^1(\cc,\theta^{\vee})$ is a vector space of constant dimension, which defines a vector bundle $E_{g,n}$ over any family of orbifold curves. Denote by $\ce$ the universal spin structure defined over the universal curve $\cu_{g,n}^{\text{spin}}\stackrel{\pi}{\longrightarrow}\modm_{g,n}^{\text{spin}}$ and define the vector bundle $E_{g,n}$ with fibre $H^1(\cc,\theta^{\vee})$ as follows.

\begin{definition} \label{obsbun}
Define the bundle $E_{g,n}:=-R\pi_*\ce^\vee \to \modm_{g,n}^{\text{spin}}$. 
\end{definition}

There are two types of behaviour of the spin structure at a marked point. The spin structure either extends over the point, in which case it is called {\em Neveu--Schwarz}, or it does not extend over the point, in which case it is called {\em Ramond}. This decomposes the moduli space of spin curves into components
\[
\modm_{g,n}^{\text{spin}}=\bigsqcup_{\sigma\in\{0,1\}^n}\modm_{g,\sigma}^{\text{spin}},
\]
where we write $\sigma=(\sigma_1, \ldots, \sigma_n)\in\{0,1\}^n$ and set $\sigma_j=1$ if $p_j$ is Neveu--Schwarz and $\sigma_j=0$ if $p_j$ is Ramond. The vector bundle $E_{g,n}$ restricts to a bundle $E_{g,\sigma}\to\modm^{\text{spin}}_{g,\sigma}\subset\modm_{g,n}^{\text{spin}}$ of rank $2g-2+\frac{1}{2}(n+|\sigma|)$, where we write $|\sigma|=\sum\sigma_j$. There is a natural Euler form $e(E_{g,\sigma})=\text{Pf}(F_A)$, defined in previous work of the second author~\cite{NorSup}. Here, $F_A$ is the curvature of the Chern connection $A$ on $E_{g,\sigma}$, defined from a natural Hermitian metric on $E_{g,\sigma}$ that comes from the super Weil--Petersson metric. The Euler form has degree $\deg e(E_{g,\sigma}^\vee)=4g-4+n+|\sigma|$.

The super Weil--Petersson volume on each component reduces to the following integral over $\modm_{g,\sigma}^{\text{spin}}$.
\begin{equation}
\widehat{V}^{\mathrm{WP}}_{g,\sigma} := 2^{g-1+\frac{1}{2}(n+|\sigma|)} \int_{\modm_{g,\sigma}^{\text{spin}}} e(E_{g,\sigma}^\vee) \, \exp\omega^{\mathrm{WP}}
\end{equation}
Let $\sigma=(1^n,0^m)$ so that $\modm_{g,\sigma}^{\text{spin}}$ parametrises spin curves with $n$ Neveu--Schwarz marked points and $m$ Ramond marked points. Consider $\omega^{\mathrm{WP}}(L_1, \ldots, L_n,0^m)$, which deforms the Weil--Petersson symplectic form only at the Neveu--Schwarz marked points. Then define
\begin{equation} \label{supvol}
\widehat{V}^{\mathrm{WP},(m)}_{g,n}(L_1, \ldots, L_n) := 2^{g-1+n+\frac{1}{2}m} \int_{{\modm}_{g,(1^n,0^m)}^{\text{spin}}}\!\! e(E_{g,(1^n,0^m)}^\vee) \, \exp\omega^{\mathrm{WP}}(L_1, \ldots, L_n),
\end{equation}
which turns out to be a polynomial in $L_1, \ldots, L_n$. Analogous to \cref{supvolpol}, define the series
\[
\widehat{V}^{\mathrm{WP}}_{g,n}(L_1, \ldots, L_n; s) := \sum_{m=0}^\infty \frac{s^m}{m!} \, \widehat{V}^{\mathrm{WP},(m)}_{g,n}(L_1, \ldots, L_n).
\]

Furthermore, define the functions
\begin{align} \label{qkerlimsup}
\widehat{H}(x,y)&:= \lim_{q\to 1} \widehat{H}_q \Big( \frac{x}{1-q}, \frac{y}{1-q} \Big) = \frac{1}{4\pi} \bigg( \frac{1}{\cosh\frac{x-y}{4}}-\frac{1}{\cosh\frac{x+y}{4}} \bigg) \notag\\
\widehat{D}(x,y,z)&:= \lim_{q\to 1} \widehat{D}_q \Big( \frac{x}{1-q}, \frac{y}{1-q}, \frac{z}{1-q} \Big) = \widehat{H}(x,y+z) \\
\widehat{R}(x,y,z)&:= \lim_{q\to 1} \widehat{R}_q \Big( \frac{x}{1-q}, \frac{y}{1-q}, \frac{z}{1-q} \Big) = \frac{1}{2} \big( \widehat{H}(x+y,z) + \widehat{H}(x-y,z) \big). \notag
\end{align}
The super Weil--Petersson volumes are uniquely determined from the initial conditions $\widehat{V}^{\mathrm{WP},(0)}_{0,1}(L) = 0$, $\widehat{V}^{\mathrm{WP},(1)}_{0,1}(L) = 0$, $\widehat{V}^{\mathrm{WP},(2)}_{0,1}(L) = 1$ and $\widehat{V}^{\mathrm{WP},(0)}_{1,1}(L_1)=\frac{1}{8}$, and the following recursion due to Stanford and Witten~\cite{SWiJTG}.

\begin{theorem}[Alexandrov and Norbury~\cite{ANoSup}]
The super Weil--Petersson volumes satisfy the following recursion for $2g-2+n \geq 2$.
\begin{align} 
L_1&\widehat{V}^{\mathrm{WP}}_{g,n}(L_1, \LL_K)=\sum_{j=2}^n\int_0^\infty \!\! x \widehat{R}(L_1,L_j,x) \, \widehat{V}^{\mathrm{WP}}_{g,n-1}(x,\LL_{K \setminus \{j\}}) \, \dd x \notag \\
&+ \frac{1}{2} \int_0^\infty \!\! \int_0^\infty \!\! xy \widehat{D}_q(L_1, x, y) \Big[\widehat{V}^{\mathrm{WP}}_{g-1,n+1}(x, y, \LL_K) + \sum_{\substack{g_1+g_2=g \\ I \sqcup J = K}} \widehat{V}^{\mathrm{WP}}_{g_1,|I|+1}(x, \LL_I) \, \widehat{V}^{\mathrm{WP}}_{g_2,|J|+1}(y, \LL_J) \Big] \, \dd x \, \dd y. \label{volrecsup}
\end{align}
\end{theorem}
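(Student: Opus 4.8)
The plan is to follow Mirzakhani's strategy, replacing her generalised McShane identity with its supergeometric refinement and carrying the Euler form $e(E_{g,\sigma}^\vee)$ through the unfolding. First I would fix a distinguished Neveu--Schwarz boundary $\beta_1$ of length $L_1$ and invoke the super analogue of the McShane--Mirzakhani identity on the super Teichm\"uller space of hyperbolic surfaces with geodesic boundary. This identity expresses $L_1$ as a sum over homotopy classes of embedded pairs of pants having $\beta_1$ as one cuff, with summands assembled from the functions $\widehat{D}$ and $\widehat{R}$ of \cref{qkerlimsup}; the two kinds of summand record the two topological types of such a pair of pants, namely the type whose other two cuffs are interior curves (contributing $\widehat{D}$) and the type whose second cuff is one of the remaining boundaries $\beta_j$ (contributing $\widehat{R}$).

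Next I would integrate this identity against $e(E_{g,(1^n,0^m)}^\vee)\exp\omega^{\mathrm{WP}}(L_1,\ldots,L_n)$ over $\modm^{\mathrm{spin}}_{g,(1^n,0^m)}$ and unfold. Since the super volume reduces to an integral over the bosonic moduli space with the fermionic directions absorbed into $e(E^\vee)$, Wolpert's theorem that $\omega^{\mathrm{WP}}=\sum\dd\ell_i\wedge\dd\tau_i$ still applies to the symplectic factor, and performing the twist integrals along the interior geodesics reduces the integral over $\modm^{\mathrm{spin}}_{g,\sigma}$ to integrals over the moduli spaces of the pieces obtained by cutting. This produces the geometric weights $x\,\widehat{R}(L_1,L_j,x)$ and $xy\,\widehat{D}(L_1,x,y)$, together with the sum over stable splittings $g_1+g_2=g$, $I\sqcup J=K$ and the non-separating term; the factor $\tfrac12$ is the usual symmetry exchanging the two interior boundaries.

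The step that genuinely requires the super structure, and which I expect to be the main obstacle, is the behaviour of the obstruction bundle $E_{g,\sigma}=-R\pi_*\ce^\vee$ under cutting along an interior geodesic. One must show that, up to the explicit powers of $2$ appearing in \cref{supvol}, the Euler form $e(E_{g,\sigma}^\vee)$ factorises as a product of the Euler forms on the pieces, with the spin data (Neveu--Schwarz versus Ramond) at the new cuffs determining which components arise. I would establish this from the normalisation exact sequence for $\theta$ across the cutting curve and the induced long exact sequence relating $H^1(\cc,\theta^\vee)$ to the cohomology of the normalisation; this yields the required direct-sum decomposition of $E$ and hence the multiplicativity of the Pfaffian Euler form, which is the cohomological heart of the argument.

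Finally I would check that the base cases $\widehat{V}^{\mathrm{WP},(0)}_{1,1}(L)=\tfrac18$ and the disk data $\widehat{V}^{\mathrm{WP},(0)}_{0,1}=\widehat{V}^{\mathrm{WP},(1)}_{0,1}=0$, $\widehat{V}^{\mathrm{WP},(2)}_{0,1}=1$ agree with direct evaluation of the defining integrals in \cref{supvol}, so that the recursion together with these initial conditions determines every $\widehat{V}^{\mathrm{WP},(m)}_{g,n}$ and matches the geometric super volumes. An alternative route, which avoids the super McShane identity altogether, would be to identify the super volumes with intersection numbers on $\modm^{\mathrm{spin}}_{g,\sigma}$ pairing the Chern classes of $E$ with the $\psi$-classes, and then to deduce the recursion from the Virasoro constraints or topological recursion controlling those intersection numbers; the Laplace transform of the relevant disk amplitude then reproduces $\widehat{H}$ and hence the kernels $\widehat{D}$ and $\widehat{R}$.
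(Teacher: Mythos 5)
First, note that the paper does not prove this theorem at all: it is imported by citation, and the paper explicitly records that the $s=0$ case ``was proven heuristically using supergeometry by Stanford and Witten \ldots and proven using algebraic geometry by the second author'', with the general case due to Alexandrov and Norbury. Your primary route --- a super McShane--Mirzakhani identity on super Teichm\"uller space, integrated against $e(E_{g,\sigma}^\vee)\exp\omega^{\mathrm{WP}}$ and unfolded --- is precisely the Stanford--Witten derivation, and it is exactly the part of that derivation you flag as ``the main obstacle'' that has resisted being made rigorous. The actual proofs in the literature go the other way around: they identify the right side of \cref{supvol} with intersection numbers of $e(E_{g,\sigma}^\vee)$ against $\psi$- and $\kappa$-classes on $\modm_{g,\sigma}^{\mathrm{spin}}$ and derive \cref{volrecsup} from the Virasoro constraints / topological recursion governing those numbers, recovering the kernels $\widehat{D}$, $\widehat{R}$ of \cref{qkerlimsup} by Laplace transform. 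In other words, what you offer as an ``alternative route'' in your last sentence is the proof; what you offer as the main argument is the heuristic.

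The concrete gap in your main argument is the factorisation step for the obstruction bundle. The normalisation exact sequence for $\theta$ and the induced long exact sequence on $H^\bullet(\cc,\theta^\vee)$ describe the behaviour of $E_{g,\sigma}=-R\pi_*\ce^\vee$ at a \emph{nodal degeneration}, i.e.\ on a boundary divisor of the Deligne--Mumford compactification. Mirzakhani's unfolding does not pass to the boundary: it replaces $\modm_{g,\sigma}^{\mathrm{spin}}$ by an intermediate cover parametrising a surface together with an embedded pair of pants, fibred over the moduli of the cut (smooth, non-nodal) pieces with twist--length fibres. To push $e(E_{g,\sigma}^\vee)$ through that fibration you would need a splitting of $E$ over the cover compatible with cutting along an interior \emph{geodesic}, which is not what the normalisation sequence provides, and you would further need to control how the Chern connection and Pfaffian representative of the Euler class restrict to the fibres. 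You would also need to account for the Ramond insertions (the full $s$-series in the statement), which your McShane-identity setup does not address: the identity is applied at a Neveu--Schwarz boundary, but the embedded pairs of pants can absorb Ramond punctures and this changes which components $\modm_{g,\sigma'}^{\mathrm{spin}}$ and which ranks of $E_{g,\sigma'}$ appear after cutting. Absent these ingredients the argument remains a (well-motivated) heuristic rather than a proof; if you want a complete argument, develop your final paragraph into the main line.
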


The $s=0$ case of \cref{volrecsup} was proven heuristically using supergeometry by Stanford and Witten in~\cite{SWiJTG}, and proven using algebraic geometry by the second author~\cite{NorEnu}.

We now provide the proof of \cref{super}, which is similar to that of \cref{thm:main}, but requires a new collection of $q$-series identities.

\begin{proposition} \label{prop:qidentodd}
For any $k\in\bz$,
\begin{equation} \label{qidentodd}
\sum_{m~\mathrm{odd}}(-1)^{(m-1)/2} q^{(m^2-1)/4} (1+q^m)\frac{q^{mk}}{(1-q^m)^{2k+1}}= \psi(q)^2\cdot s_k \big( \zeta^{\mathrm{odd}}_q(2), \zeta^{\mathrm{odd}}_q(4), \ldots \big),
\end{equation}
where the sum is over the odd positive integers and $s_k$ is defined as in \cref{prop:qident} by
\[
\exp \bigg(\sum_{m=1}^\infty \frac{p_m}{m} \, t^m \bigg) = \sum_{k=-\infty}^\infty s_k(p_1, p_2, \ldots) \, t^k.
\]
\end{proposition}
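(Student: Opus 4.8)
The plan is to follow the proof of \cref{prop:qident} line by line, replacing the product over all $m$ by a product over odd $m$ and carrying the extra factor $\psi(q)^2$. As there, I would record the substitution $t = i(z^{1/2}-z^{-1/2})$, under which $(1-q^m)^2 + t^2 q^m = (1-zq^m)(1-z^{-1}q^m)$, and reduce everything to the single $z$-dependent identity
\[
\psi(q)^2 \prod_{m \text{ odd}} \frac{(1-q^m)^2}{(1-zq^m)(1-z^{-1}q^m)} = \sum_{m \text{ odd}} (-1)^{(m-1)/2} q^{(m^2-1)/4} (1+q^m) \frac{1-q^m}{(1-zq^m)(1-z^{-1}q^m)},
\]
which plays the role of \cref{qid}. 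Indeed, \cref{qidentodd} for $k \geq 0$ falls out by equating coefficients of $t^{2k}$, exactly as \cref{qident} was extracted from \cref{qid}.

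The right-hand side above is the easy side: expanding each factor as $\frac{1-q^m}{(1-q^m)^2+t^2q^m} = \sum_{k\geq0}(-1)^k \frac{q^{mk}}{(1-q^m)^{2k+1}}\,t^{2k}$ and collecting the coefficient of $t^{2k}$ reproduces $(-1)^k$ times the left side of \cref{qidentodd}. For the product side, taking logarithms and using the definition of $\zeta^{\mathrm{odd}}_q(2k)$ together with \cref{eq:sk} gives $\prod_{m\text{ odd}}\frac{(1-q^m)^2}{(1-q^m)^2+t^2q^m} = \sum_{k\geq0}(-1)^k s_k(\zeta^{\mathrm{odd}}_q(2),\zeta^{\mathrm{odd}}_q(4),\ldots)\,t^{2k}$, verbatim as in \cref{eq:prooflhs} but with the sums restricted to odd $m$; multiplying by $\psi(q)^2$ then matches the right side of \cref{qidentodd}.

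The crux is therefore the displayed product-to-sum identity, which I would establish by theta-function methods. Using the Jacobi triple product in the form
\[
\prod_{n\geq1}(1-q^{2n})(1-q^{2n-1}z)(1-q^{2n-1}z^{-1}) = \sum_{n\in\bz}(-1)^n q^{n^2}z^n,
\]
together with $\psi(q)\prod_{m\text{ odd}}(1-q^m) = \prod_{n\geq1}(1-q^{2n})$, the left side collapses to $\big(\prod_{n\geq1}(1-q^{2n})\big)^3 / \theta(z)$, where $\theta(z)=\sum_{n\in\bz}(-1)^n q^{n^2}z^n$. The claimed identity is then precisely the Mittag-Leffler (partial fraction) expansion of the reciprocal theta function: $\theta(z)$ has simple zeros at $z=q^{\pm m}$ for odd $m$, and a residue computation at each pole, in which the factor $(1+q^m)(1-q^m)=1-q^{2m}$ cancels, yields the coefficients $(-1)^{(m-1)/2}q^{(m^2-1)/4}$. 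This reciprocal-theta expansion is classical (an Andrews-type partial fraction), so I would either cite it or prove it by a Liouville argument: both sides are meromorphic in $z$ with the same simple poles and the same quasi-periodicity under $z\mapsto q^2 z$ (note $\theta(q^2z)=-q^{-1}z^{-1}\theta(z)$), so their difference is entire, bounded, and hence constant, and the constant is seen to vanish.

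Two inputs then remain. The base case $k=0$ of \cref{qidentodd}, namely $\sum_{m\text{ odd}}(-1)^{(m-1)/2}q^{(m^2-1)/4}\frac{1+q^m}{1-q^m} = \psi(q)^2$, is the analogue of the telescoping sum \cref{eq:kequals0}; it is the $z\to\infty$ (equivalently $t=0$) specialisation of the identity above and follows from the same residue computation. Finally, the case $k<0$ is handled exactly as in \cref{prop:qident}: integrating $t^{2k-1}$ against the convergent series over large circles $|t|=R$ and using the uniform product estimates there forces the contour integral of the product side to vanish as $R\to\infty$, which gives the stated vanishing for $k<0$. The main obstacle is the product-to-sum identity itself — specifically, carrying out and simplifying the residues of $1/\theta(z)$ to the exact monomial form $(-1)^{(m-1)/2}q^{(m^2-1)/4}$, which is where the arithmetic of $\psi(q)^2$ enters; the remaining steps are parallel to the even case.
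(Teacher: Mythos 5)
Your proposal is correct and follows the paper's strategy almost exactly: reduce everything to the single product-to-sum identity \cref{qidodd}, obtain the cases $k\geq 0$ by equating Taylor coefficients of $t^{2k}$ (your expansion $\frac{1-q^m}{(1-q^m)^2+t^2q^m}=\sum_{k\geq 0}(-1)^k\frac{q^{mk}}{(1-q^m)^{2k+1}}t^{2k}$ and the logarithm/exponential manipulation of the product are exactly what the paper does), and handle $k<0$ by integrating $t^{2k-1}$ times both sides over large circles using \cref{contint}. The one place where you genuinely diverge is the justification of the crux identity itself. The paper simply computes the residue of $\psi(q)^2\prod_{m\ \mathrm{odd}}\frac{(1-q^m)^2}{(1-q^m)^2+zq^m}$ at each pole $z=-(q^{-N/2}-q^{N/2})^2$, using $\big((1-q^m)^2+zq^m\big)\big|_{z=-(q^{-N/2}-q^{N/2})^2}=(1-q^{m-N})(1-q^{m+N})$, and matches it against the sum side, leaving implicit why agreement of poles and residues forces equality. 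You route the same residue computation through the Jacobi triple product, identifying the product with $\big(\prod(1-q^{2n})\big)^3/\theta(z)$, and then close the gap with a Liouville argument -- which is arguably more complete than what the paper records. Two small corrections to that step: first, the difference $E(z)$ of the two sides satisfies $E(q^2z)=-qz\,E(z)$ and is not itself bounded, so the Liouville argument should be applied to $E(z)\theta(z)$, which is holomorphic on $\mathbb{C}\setminus\{0\}$, invariant under $z\mapsto q^2z$, hence constant, and vanishes at the zeros of $\theta$ (and one must verify, by a short reindexing, that the sum side really does satisfy the same quasi-periodicity). Second, the $k=0$ specialisation $\sum_{m\ \mathrm{odd}}(-1)^{(m-1)/2}q^{(m^2-1)/4}\frac{1+q^m}{1-q^m}=\psi(q)^2$ corresponds to $z=1$, not $z\to\infty$; in any case it is subsumed by the general coefficient extraction, so no separate base case is needed. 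Neither issue affects the validity of the overall argument.
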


\begin{proof}
For $|q|<1$, the following series and product converge to holomorphic functions uniformly for $t$, in compact sets disjoint from the poles.
\begin{equation} \label{qidodd}
\sum_{m~\mathrm{odd}}(-1)^{(m-1)/2} q^{(m^2-1)/4} \frac{1+q^m}{1-q^m} \frac{(1-q^m)^2}{(1-q^m)^2+t^2q^m}=\psi(q)^2\prod_{m~\mathrm{odd}}\frac{(1-q^m)^2}{(1-q^m)^2+t^2q^m}
\end{equation}
To obtain \cref{qidodd}, put $t^2=z$ and calculate the residue at each pole in the product as follows.
\begin{align*} 
\Res_{z=-(q^{-N/2}-q^{N/2})^2}\psi(q)^2\prod_{m~\mathrm{odd}}\frac{(1-q^m)^2}{(1-q^m)^2+zq^m}&=\psi(q)^2\frac{(1-q^N)^2}{q^N} \prod_{\mathrm{odd}~m\neq N} \frac{(1-q^m)^2}{(1-q^{m-N})(1-q^{m+N})}\\
&=(-1)^{(N-1)/2} q^{(N^2-1)/4-N}(1-q^{2N}),
\end{align*} 
which uses $\big((1-q^m)^2+zq^m\big)|_{z=-(q^{-N/2}-q^{N/2})^2}=(1-q^{m-N})(1-q^{m+N})$.

The coefficient of $t^{2k}$ in the Taylor expansion about $t=0$ of \cref{qidodd} produces the $k$th identity of \cref{qidentodd} as follows. The coefficient of $t^{2k}$ in the Taylor expansion about $t=0$ of the left side of \cref{qidodd} is the sum over $m$ of the Taylor coefficient of each summand, due to the factor $q^{(m^2-1)/4}$, which guarantees convergence of the partial sums. It is given by
\[
(-1)^k\sum_{m~\mathrm{odd}}(-1)^{(m-1)/2} q^{(m^2-1)/4} (1+q^m)\frac{q^{mk}}{(1-q^m)^{2k+1}}.
\] 
Take the Taylor expansion of the logarithm of the right side of \cref{qidodd} divided by $\psi(q)^2$.
\begin{align*}
\log\prod_{m~\mathrm{odd}}\frac{(1-q^m)^2}{(1-q^m)^2+t^2q^m} &= -\sum_{m~\mathrm{odd}}\log\Big(1+\frac{t^2q^m}{(1-q^m)^2}\Big)=\sum_{k=1}^\infty (-1)^k\frac{t^{2k}}{k}\sum_{m~\mathrm{odd}} \frac{q^{mk}}{(1-q^m)^{2k}}\\
&=\sum_{k=1}^\infty (-1)^k\frac{t^{2k}}{k}\zeta^{\mathrm{odd}}_q(2k)
\end{align*}
Now exponentiate both sides and use \cref{eq:sk} to obtain
\[
\prod_{m~\mathrm{odd}}\frac{(1-q^m)^2}{(1-q^m)^2+t^2q^m}=\sum_{k=0}^\infty (-1)^k s_k(\zeta^{\mathrm{odd}}_q(2), \zeta^{\mathrm{odd}}_q(4), \ldots, \zeta^{\mathrm{odd}}_q(2k)) \, t^{2k}.
\]
Putting this all together we have, we have \cref{qidentodd} for $k \geq 0$.

To prove \cref{qidentodd} for $k < 0$, denote by $G(s)$ the function given by \cref{qidodd}, integrate $s^{2k-1}G(s)$ over large circles, and use \cref{contint}. Writing $f_m(q) = (-1)^{(m-1)/2} q^{(m^2-1)/4} (1+q^m)$, we have
\[
\frac{1}{2\pi i} \oint_{|s|=R} s^{2k-1} \, \dd s\sum_{m~\mathrm{odd}}f_m(q)\frac{(1-q^m)}{(1-q^m)^2+t^2q^m} = \sum_{\mathrm{odd}~m\leq N}f_m(q)\frac{(1-q^m)^{2k-1}}{q^{mk}}.
\]
Again, uniform estimates show that the left side converges to 0 as $N \to \infty$. So for all $k>0$, we have
\[
\lim_{N \to \infty} \sum_{\mathrm{odd}~m\leq N}(-1)^{(m-1)/2} q^{(m^2-1)/4} (1+q^m)\frac{q^{mk}}{(1-q^m)^{2k+1}}=0,
\]
as required.
\end{proof}

For $\widehat{H}_q(x,y)$ defined in \cref{Hqsup}, define
\[
\widehat{F}_{2k+1}(y)=\int_0^\infty \!\! x^{2k+1}\widehat{H}_q(x,y) \, \dd x.
\]
The function $\widehat{H}_q(x,y)$ is to be understood as the series in $y^2$ given by
\[
\widehat{H}_q(x,y)=\frac{1}{8\psi(q)^2} \cosh \Big( y\frac{\dd}{\dd x} \Big) \, \widehat{h}_q(x)=\frac{1}{8\psi(q)^2}\sum_{n= 0}^\infty \frac{y^{2n}}{(2n)!} \frac{\dd^{2n}}{\dd x^{2n}} \widehat{h}_q(x),
\]
where
\[
\widehat{h}_q(x)=\sum_{m~\mathrm{odd}}(-1)^{(m-1)/2} q^{(m^2-1)/4} (1+q^m)e^{\frac{x}{2}\left(q^{m/2}-q^{-m/2}\right)}.
\]

\begin{proposition} \label{prop:F2k+1odd}
For each non-negative integer $k$, we have
\[
\frac{\widehat{F}_{2k+1}(y)}{(2k+1)!} = \int_0^\infty \!\! \frac{x^{2k+1}}{(2k+1)!}\widehat{H}_q(x,y) \, \dd x = \sum_{n=0}^{k} \widehat{b}_n \frac{y^{2k+1-2n}}{(2k+1-2n)!},
\]
where $\widehat{b}_0, \widehat{b}_1, \widehat{b}_2, \ldots \in \bq[\zeta^{\mathrm{odd}}_q(2), \zeta^{\mathrm{odd}}_q(4), \ldots]$ are defined by
\[
\sum_{n=0}^\infty \widehat{b}_nz^{2n}=\exp \bigg( \sum_{m~\mathrm{odd}}\frac{\zeta^{\mathrm{odd}}_q(2m)}{m} (4z^2)^m \bigg) =1+4\zeta^{\mathrm{odd}}_q(2)z^2+8(\zeta^{\mathrm{odd}}_q(2)^2+\zeta^{\mathrm{odd}}_q(4))z^4 + \cdots.
\]
\end{proposition}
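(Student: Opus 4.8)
The plan is to mirror the proof of \cref{F2k+1} step for step, replacing the even $q$-series identity \cref{qident} by its odd counterpart \cref{qidentodd} and keeping careful track of the normalising factor $\psi(q)^2$. Working directly from \cref{Hqsup}, I would first factor out the common exponential in $x$ from each summand, so that the entire $y$-dependence of $\widehat{H}_q(x,y)$ is carried by a single hyperbolic sine. This exhibits $\widehat{H}_q(x,y)$ as an \emph{odd} function of $y$, in contrast to the even $H_q$ of \cref{sec2}, and it is precisely this oddness that will force an odd power of $(1-q^m)$ into the denominator and thereby bring the odd identity \cref{prop:qidentodd} into play rather than \cref{qident}.

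First I would establish integrability of $x^{2k+1}\widehat{H}_q(x,y)$ in $x$. Since $q^{m/2}-q^{-m/2}<0$ for $0<q<1$, each exponential decays, and the inequality $q^{-m/2}-q^{m/2}>m(1-q)$ of \cref{qmcomp} — which is unaffected by restricting to odd $m$ — dominates the series by a constant multiple of $\frac{1}{e^{\frac12(1-q)x}-1}$, exactly as in \cref{F2k+1}. The prefactor $\frac{1}{8\psi(q)^2}$ is a finite nonzero constant for $|q|<1$ and so plays no role in integrability. The dominated convergence theorem then justifies interchanging the sum over odd $m$ with the integral.

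Next I would compute termwise. Each integral is the elementary Gamma integral $\int_0^\infty x^{2k+1} e^{cx}\,\dd x = (2k+1)!\,(-c)^{-(2k+2)}$ for $c<0$, and expanding the hyperbolic sine produces, as the coefficient of the relevant power of $y$, a constant multiple of $\sum_{m~\mathrm{odd}} c_m\,(q^{m/2}-q^{-m/2})^{-(2j+1)}$ with $j=k-n$, where $c_m=(-1)^{(m-1)/2}q^{(m^2-1)/4}(q^{m/2}+q^{-m/2})$. The key algebraic step is the reduction, using $(q^{m/2}+q^{-m/2})=q^{-m/2}(1+q^m)$, $(q^{m/2}-q^{-m/2})^{-1}=-q^{m/2}/(1-q^m)$ and $(q^{m/2}-q^{-m/2})^{2}=q^{-m}(1-q^m)^2$, of
\[
c_m\,(q^{m/2}-q^{-m/2})^{-(2j+1)} = -(-1)^{(m-1)/2}q^{(m^2-1)/4}(1+q^m)\frac{q^{mj}}{(1-q^m)^{2j+1}}.
\]
Summing over odd $m$ and invoking \cref{qidentodd} evaluates this as $-\psi(q)^2\,s_{k-n}(\zeta^{\mathrm{odd}}_q(2),\zeta^{\mathrm{odd}}_q(4),\ldots)$, and the factor $\psi(q)^2$ cancels the prefactor $1/\psi(q)^2$, leaving a rational multiple of $s_{k-n}$. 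The odd denominator $(1-q^m)^{2j+1}$ is exactly why the odd identity \cref{prop:qidentodd}, and not \cref{qident}, is the one that applies.

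Finally, for $n>k$ the index $j=k-n$ is negative, and the $k<0$ case of \cref{prop:qidentodd} shows the corresponding $m$-sum vanishes; this proves that $\widehat{F}_{2k+1}(y)$ is genuinely a polynomial, with only the odd powers $y^{2k+1-2n}$ for $0\le n\le k$ surviving. Reindexing by $n'=k-n$ and comparing with the generating series \cref{eq:sk} identifies the coefficients as the claimed $\widehat{b}_{n'}\in\bq[\zeta^{\mathrm{odd}}_q(2),\zeta^{\mathrm{odd}}_q(4),\ldots]$. I expect the main obstacle to be the bookkeeping in this reduction — verifying the displayed identity for $c_m\,(q^{m/2}-q^{-m/2})^{-(2j+1)}$ and confirming that the half-integer powers of $q$ thrown up by the $\sinh$ expansion, together with the $\psi(q)^2$ factors, conspire to leave exactly the integer-power, odd-denominator form of \cref{qidentodd}; as in \cref{F2k+1}, the polynomiality itself rests on the cancellation of negative powers of $q$ encoded in the $k<0$ case of that identity.
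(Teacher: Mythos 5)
Your proposal is correct and follows essentially the same route as the paper's proof: integrability via the same domination estimates as in \cref{F2k+1}, termwise Gamma integration, reduction of each coefficient of $y^{2k+1-2n}$ to the odd $q$-series identity of \cref{prop:qidentodd} (whose $k<0$ case supplies the vanishing for $n>k$), and reindexing against the generating series \cref{eq:sk}. Your displayed reduction of $c_m\,(q^{m/2}-q^{-m/2})^{-(2j+1)}$ checks out, and your observation that the oddness of $\widehat{H}_q$ in $y$ is what forces the odd power of $(1-q^m)$ in the denominator --- and hence calls for \cref{prop:qidentodd} rather than \cref{prop:qident} --- is precisely the mechanism the paper's proof uses implicitly.
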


\begin{proof}
We begin by calculating the following integral.
\begin{align*}
 \int_0^\infty \!\! \frac{x^{2k+1}}{(2k+1)!} \widehat{h}_q(x) \, \dd x &= \sum_{m~\mathrm{odd}}(-1)^{(m-1)/2} q^{(m^2-1)/4} q^{-m}(1+q^m)(1-q^m) \int_0^\infty \!\! \frac{x^{2k+1}}{(2k+1)!}e^{\frac{x}{4}(q^{m/2}-q^{-m/2})} \, \dd x \\
 &=\sum_{m~\mathrm{odd}}(-1)^{(m-1)/2} q^{(m^2-1)/4} (1+q^m)\frac{4^{2k}q^{mk}}{(1-q^m)^{2k+1}}\\
 &=4^{2k}\psi(q)^2 \cdot s_k(\zeta^{\mathrm{odd}}_q(2), \zeta^{\mathrm{odd}}_q(4), \ldots) 
\end{align*}

The proof of integrability of $x^{2k+1} \widehat{H}_q(x,y)$ follows the argument of the proof of \cref{F2k+1}. It is a consequence of integrability of $\widehat{h}_q(x)$, which uses the same estimates as the proof of \cref{F2k+1}, together with the inequality $\frac{1}{8\psi(q)^2} < 1$.

For $n>k+1$, the $q$-series identity \cref{qident} gives vanishing of the sum 
\[
\sum_{m=1}^\infty (-1)^{m-1} q^{m(m-1)/2} (1+q^m) (q^{m/2}-q^{-m/2})^{2n-2k-2}=0.
\]
Putting this all together, we have
\begin{align*}
\int_0^\infty \!\! \frac{x^{2k+1}}{(2k+1)!}\widehat{H}(x,y) \, \dd x &=\frac{1}{2}\sum_{m=1}^\infty\sum_{n=0}^{k+1}(-1)^{m-1}q^{m(m-1)/2}(1+q^m)
\frac{\left(q^{m/2}-q^{-m/2}\right)^{2n}}{(2n)! \, (q^{-m/2}-q^{m/2})^{2k+2}} \, y^{2n}\\
&=\frac{1}{2}\sum_{i=0}^{k+1}\sum_{m=1}^\infty(-1)^{m-1}q^{m(m-1)/2}(1+q^m)
\frac{q^{mi}}{(1-q^m)^{2i}}\frac{y^{2k+2-2i}}{(2k+2-2i)!}\\
&=\frac{1}{2}\sum_{i=0}^{k+1} s_k \big( \zeta^{\mathrm{odd}}_q(2), \zeta^{\mathrm{odd}}_q(4), \ldots \big)\frac{y^{2k+1-2i}}{(2k+1-2i)!}.
\end{align*}
Equivalently, $\widehat{F}_{2k+1}(y)$ is a polynomial in $y$ given by
\[
\frac{\widehat{F}_{2k+1}(y)}{(2k+1)!}=\sum_{n=0}^{k+1} \widehat{b}_n \frac{y^{2k+2-2n}}{(2k+2-2n)!}. \qedhere
\]
\end{proof}

\begin{proposition} \label{symsup}
The recursion of \cref{qvolrecsup} produces series in $s^2$ with coefficients given by symmetric polynomials in $L_1^2, \ldots, L_n^2$.
\end{proposition}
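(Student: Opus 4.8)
The plan is to follow the proof of \cref{sym} almost verbatim, replacing every unhatted object by its hatted counterpart, and to isolate the one genuinely new feature --- the vanishing of the odd powers of $s$ --- as a separate parity argument. The first step is to record the structural properties of the super kernels that drive the whole argument. Writing $\widehat{H}_q(x,y)$ from \cref{Hqsup} as a series shows that it is an \emph{odd} function of its second argument, $\widehat{H}_q(x,-y)=-\widehat{H}_q(x,y)$, and in particular $\widehat{H}_q(x,0)=0$. Since $\widehat{D}_q(x,y,z)=\widehat{H}_q(y+z,x)$ and $\widehat{R}_q(x,y,z)=\tfrac12\big(\widehat{H}_q(z,x+y)+\widehat{H}_q(z,x-y)\big)$, both kernels are odd in their first slot, $\widehat{D}_q(-x,y,z)=-\widehat{D}_q(x,y,z)$ and $\widehat{R}_q(-x,y,z)=-\widehat{R}_q(x,y,z)$, and both vanish at $x=0$. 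These are exactly the properties of $D_q$ and $R_q$ exploited in \cref{sym}, so the divisibility-by-$L_1$ and symmetry mechanisms transfer without change.

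For polynomiality, integrability of $x^{2k+1}\widehat{H}_q(x,y)$ is already established in \cref{prop:F2k+1odd}, which also evaluates $\widehat{F}_{2k+1}(y)=\int_0^\infty x^{2k+1}\widehat{H}_q(x,y)\,\dd x$ as a polynomial in $y$. As in \cref{sym}, the single integral against $\widehat{R}_q$ reduces to $\tfrac12\big(\widehat{F}_{2k+1}(x+y)+\widehat{F}_{2k+1}(x-y)\big)$, while the double integral against $\widehat{D}_q$ reduces, after the change of variables $y=\tfrac12(u+v)$, $z=\tfrac12(u-v)$ and an application of Fubini's theorem, to a rational multiple of $\widehat{F}_{2i+2j+3}(x)$. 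Expanding the smaller volumes --- polynomials in their squared arguments by induction --- against these kernels yields a polynomial $L_1\widehat{V}_{g,n}(L_1,\LL_K)$; divisibility by $L_1$ (kernels vanish at $0$) together with oddness in $L_1$ (kernels odd in the first slot) gives a polynomial in $L_1^2$, and in each $L_j^2$ by induction. Symmetry then follows by the identical device closing \cref{sym}: it suffices to check that \cref{qvolrecsup}, applied with $L_1$ and with $L_2$ distinguished and summed, vanishes upon setting $L_2=-L_1$, which is immediate from the oddness of $\widehat{R}_q,\widehat{D}_q$ in the first argument and the evenness of the smaller volumes, with the split contributions cancelling via $\widehat{D}_q(-x,y,z)=-\widehat{D}_q(x,y,z)$. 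Induction on $2g-2+n$ upgrades symmetry in $L_1,L_2$ to full symmetry.

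The genuinely new ingredient is that the generating series is even in $s$. I would prove this by a parity induction. Extracting the coefficient of $s^m/m!$ from \cref{qvolrecsup}, the right-hand side is a sum of terms linear in a single smaller coefficient $\widehat{V}_\bullet^{(m)}$, together with bilinear terms $\sum_{m_1+m_2=m}\binom{m}{m_1}\widehat{V}_{g_1,|I|+1}^{(m_1)}\widehat{V}_{g_2,|J|+1}^{(m_2)}$. If inductively all smaller volumes vanish in odd $s$-degree, then for odd $m$ every linear term vanishes, and every bilinear term carries a factor of odd $s$-degree and hence vanishes; thus $L_1\widehat{V}_{g,n}^{(m)}=0$ and so $\widehat{V}_{g,n}^{(m)}=0$. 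The base data $\widehat{V}_{0,1}^{(1)}=0$ and $\widehat{V}_{1,1}^{(0)}=\tfrac18$ are even in $s$, seeding the induction.

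The part requiring the most care is that, unlike the clean base cases of \cref{sym}, the low-complexity super volumes are determined self-referentially: the $(0,1)$ instance of \cref{qvolrecsup} expresses $\widehat{V}_{0,1}$ in terms of the product $\widehat{V}_{0,1}(x)\,\widehat{V}_{0,1}(y)$. Because $\widehat{V}_{0,1}$ begins at order $s^2$, this product begins at order $s^4$, so the coefficient of $s^m$ on the right involves only $\widehat{V}_{0,1}^{(m')}$ with $m'\le m-2$; the recursion is therefore solvable by a secondary induction on $s$-degree, which simultaneously delivers both polynomiality and evenness for the disk. Organising these nested inductions --- the outer one on $2g-2+n$ and the inner one on $m$ for the self-referential cases --- and verifying that the sign bookkeeping remains consistent with \cref{prop:F2k+1odd}, in which $\widehat{F}_{2k+1}$ is odd rather than even in $y$, is the main obstacle; once it is set up, each individual step is a direct transcription of the corresponding step in \cref{sym}.
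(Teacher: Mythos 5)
Your proof is correct and follows essentially the same route as the paper's: transfer the proof of \cref{sym} to the hatted kernels, using the vanishing $\widehat{D}_q(0,y,z)=\widehat{R}_q(0,y,z)=0$ for divisibility by $L_1$ and the oddness of $\widehat{D}_q$ and $\widehat{R}_q$ in their first slot for symmetry, with \cref{qidentodd} and \cref{prop:F2k+1odd} replacing their unhatted counterparts. You do go beyond the paper's own (very terse) proof in two worthwhile respects. First, the paper never actually argues the ``series in $s^2$'' part of the statement; your parity induction on the coefficient of $s^m/m!$ --- for odd $m$ every linear term and every bilinear term $\sum_{m_1+m_2=m}$ carries a factor of odd $s$-degree that vanishes inductively --- supplies this. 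Second, you correctly flag that, unlike the ordinary case where $V_{0,1}=V_{0,2}=0$, the bilinear sum in \cref{qvolrecsup} contains unstable $\widehat{V}_{0,1}$ factors paired with $\widehat{V}_{g,n}$ itself, so the recursion is only well-founded via a secondary induction on $m$, which works because $\widehat{V}_{0,1}=O(s^2)$; this applies not only to $(g,n)=(0,1)$ but to every $(g,n)$. Your sign bookkeeping is also right: since the hatted kernels are $\widehat{H}_q$ itself rather than an $x$-antiderivative of it, the oddness of $\widehat{F}_{2k+1}$ in $y$ is exactly what makes $L_1\widehat{V}_{g,n}$ odd in $L_1$, hence $\widehat{V}_{g,n}$ even.
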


\begin{proof}
The proof is similar to the proof of \cref{sym}. Integrability of the functions $z^{2k+1}\widehat{R}_q(x,y,z)$ and $y^{2i+1}z^{2j+1}\widehat{D}_q(x,y,z)$ is an immediate consequence of integrability of $x^{2k+1}\widehat{H}_q(x,y)$, as in the proof of \cref{sym}.

Furthermore, the integrals of $\widehat{D}_q(x,y,z)$ and $\widehat{R}_q(x,y,z)$ multiplied by the polynomials appearing in \cref{qvolrecsup} are uniquely determined by $\widehat{F}_{2k+1}(y)=\int_0^\infty \! x^{2k+1}\widehat{H}_q(x,y) \, \dd x$. These are polynomial due to the $q$-series identities of \cref{qidentodd}, which now replace \cref{qident}.

The kernels $\widehat{R}_q(x,y,z)$ and $\widehat{D}_q(x,y,z)$ share two properties with the kernels $R_q(x,y,z)$ and $D_q(x,y,z)$ that allow us to adapt the proof of \cref{sym} to show that $\widehat{V}_{g,n}(L_1, \ldots, L_n)$ is a symmetric polynomial. First, the vanishing $\widehat{D}_q(0,y,z)=\widehat{R}_q(0,y,z)=0$ holds. This proves divisibility of the right side of \cref{qvolrecsup} by $L_1$ and hence, gives polynomiality of $\widehat{V}_{g,n}(L_1, \ldots, L_n)$. Second, $\widehat{R}_q(-L_1,L_j,x)=-\widehat{R}_q(L_1,L_j,x)$ and $\widehat{D}_q(-x,y,z)=-\widehat{D}_q(x,y,z)$ hold, which can then be used to prove that the polynomial $\widehat{V}_{g,n}(L_1, \ldots, L_n)$ is symmetric.
\end{proof}

\begin{proposition} \label{limitsup}
The series $\widehat{V}_{g,n}(L_1, \ldots, L_n) \in\bq[[q]][L_1^2, \ldots, L_n^2][[s^2]]$ tend to the super Weil--Petersson volumes in the following rescaled $q \to 1$ limit.
\[
\lim_{q\to 1}(1-q)^{2g-2} \, \widehat{V}_{g,n}\Big(\frac{L_1}{1-q}, \ldots, \frac{L_n}{1-q}; (1-q)s \Big)=\widehat{V}^{\mathrm{WP}}_{g,n}(L_1, \ldots, L_n; s).
\]
\end{proposition}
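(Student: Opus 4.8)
The plan is to mirror the proof of \cref{limit}, substituting the odd analogues of its ingredients: \cref{prop:F2k+1odd} for \cref{F2k+1}, the kernel limits of \cref{qkerlimsup} for those of \cref{qkerlim}, and the Stanford--Witten recursion \cref{volrecsup} for Mirzakhani's. First I would fix the bookkeeping of powers of $(1-q)$. Extracting the coefficient of $s^m$ from the claimed limit, the statement is equivalent to
\[
\lim_{q\to1}(1-q)^{2g-2+m}\,\widehat{V}^{(m)}_{g,n}\Big(\tfrac{L_1}{1-q}, \ldots, \tfrac{L_n}{1-q}\Big) = \widehat{V}^{\mathrm{WP},(m)}_{g,n}(L_1, \ldots, L_n)
\]
for every $m$, where the factor $(1-q)^{2g-2}$ is the prefactor and $(1-q)^m$ comes from the substitution $s\mapsto(1-q)s$. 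This exponent matches the top total degree $2g-2+m$ of the polynomial $\widehat{V}^{\mathrm{WP},(m)}_{g,n}$ in the lengths, so the rescaled top-degree terms survive while the subleading terms are governed by the rescaled behaviour of the odd $q$-zeta values.

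The one genuinely new input is the rescaled limit of the building-block integrals $\widehat{F}_{2k+1}$. By \cref{prop:F2k+1odd} it suffices to compute $\lim_{q\to1}(1-q)^{2n}\widehat{b}_n$, and since $\widehat{b}_n$ is a weight-$2n$ polynomial in the $\zeta^{\mathrm{odd}}_q$ values, I would first establish
\[
\lim_{q\to1}(1-q)^{2k}\,\zeta^{\mathrm{odd}}_q(2k) = \sum_{m\ \mathrm{odd}}\frac{1}{m^{2k}} = (1-2^{-2k})\,\zeta(2k),
\]
using $1-q^m\sim m(1-q)$ together with the uniform estimates already exploited in \cref{prop:qidentodd} to justify interchanging sum and limit. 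Feeding this into the generating function of \cref{prop:F2k+1odd} gives
\[
\lim_{q\to1}\sum_{n=0}^\infty (1-q)^{2n}\,\widehat{b}_n\,z^{2n} = \exp\bigg(\sum_{m\ \mathrm{odd}}\frac{(1-2^{-2m})\,\zeta(2m)}{m}(4z^2)^m\bigg) = \sqrt{\frac{\cosh\pi z}{\cos\pi z}},
\]
where the closed form follows from the Weierstrass products for $\sin$ and $\sinh$ exactly as in \cref{limit}. Writing $\widehat{b}^M_n$ for these limiting coefficients, this yields $\lim_{q\to1}(1-q)^{2k+1}\widehat{F}_{2k+1}(\tfrac{y}{1-q})=\widehat{F}^M_{2k+1}(y)$, and a direct evaluation of $\int_0^\infty x^{2k+1}\widehat{H}(x,y)\,\dd x$ for the limiting kernel $\widehat{H}$ of \cref{qkerlimsup} confirms that $\widehat{F}^M_{2k+1}$ is precisely the polynomial appearing in the recursion \cref{volrecsup}.

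With these limits in hand I would run the induction. The base cases are the seeds $\widehat{V}^{(0)}_{0,1}=\widehat{V}^{(1)}_{0,1}=0$, $\widehat{V}^{(2)}_{0,1}=1$ and $\widehat{V}^{(0)}_{1,1}=\tfrac18$, whose rescaled limits agree with the corresponding super Weil--Petersson seeds by inspection. The recursion strictly lowers $2g-2+n$ except for the disk self-gluing at $(0,1)$, which fixes $(g,n)$ but strictly lowers the $s$-degree $m$; accordingly I would induct on $2g-2+n$, treating the disk by a secondary induction on $m$. For the inductive step I substitute the rescaled arguments into \cref{qvolrecsup}, evaluate each integral as a linear combination of the $\widehat{F}_{2k+1}$ using the reductions from the proof of \cref{symsup}, and pass to the limit via the kernel limits of \cref{qkerlimsup}, the $\widehat{F}$-limit above, and the inductive hypothesis on the $\widehat{V}$-factors. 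Since the recursion together with its seeds determines $\widehat{V}^{(m)}_{g,n}$ uniquely (as shown in \cref{symsup}) and \cref{volrecsup} likewise determines $\widehat{V}^{\mathrm{WP},(m)}_{g,n}$, matching seeds and matching the rescaled recursion completes the induction.

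The main obstacle is the power-counting bookkeeping in the inductive step. Unlike the non-super case, the kernels $\widehat{D}_q$ and $\widehat{R}_q$ converge under rescaling with \emph{no} compensating factor of $(1-q)$ (compare \cref{qkerlim} with \cref{qkerlimsup}), because they are built directly from $\widehat{H}_q$ rather than as antiderivatives; moreover, integrating against $x^{2k+1}$ produces $\widehat{F}_{2k+1}$ evaluated at the rescaled length argument \emph{without} a Jacobian, since the integration variable lives in its own scale. The delicate point is to verify that the factor $(1-q)^{-(2k+1)}$ from each $\widehat{F}_{2k+1}(\tfrac{\cdot}{1-q})$, the factors $(1-q)^{-(2g'-2+m')}$ supplied by the inductive hypothesis on the $\widehat{V}$-factors, and the factor $(1-q)^m$ from $s\mapsto(1-q)s$ combine so that every term carries the same power of $(1-q)$ as the left side $L_1\widehat{V}_{g,n}$; a short exponent count shows this power vanishes after multiplying the recursion through by $(1-q)^{2g-1}$, so the limit reproduces \cref{volrecsup} exactly. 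This balance hinges on $\widehat{F}^M_{2k+1}$ having degree $2k+1$, one less than its non-super counterpart, and on the correct distribution of the $s$-degree $m=m_1+m_2$ across the two factors in the separating term. Justifying the interchange of limit and integral uses the same dominated-convergence estimates as in \cref{prop:F2k+1odd}.
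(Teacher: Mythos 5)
Your overall route coincides with the paper's proof: compute the rescaled $q\to1$ limit of the building blocks $\widehat{F}_{2k+1}$ by taking the limit of the odd $q$-zeta values inside the generating function of the $\widehat{b}_n$, check the seeds, and close the argument by induction using the fact that the recursion's integrals are determined by the $\widehat{F}_{2k+1}$ (via \cref{symsup}) while \cref{volrecsup} is determined by the corresponding limiting polynomials $G_{2k+1}$ of Alexandrov--Norbury. Your extra care with the disk case (a secondary induction on the $s$-degree $m$, since the $(0,1)$ self-gluing term preserves $(g,n)$) and with the $(1-q)$ power count is a welcome sharpening of the paper's terse treatment.

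There is, however, a concrete error in the one genuinely new computation. You restrict the exponent in the generating function of the $\widehat{b}_n$ to odd $m$ and obtain the closed form $\sqrt{\cosh\pi z/\cos\pi z}$. That restriction is a typo in the displayed formula of \cref{prop:F2k+1odd}: the proof of that proposition yields $\widehat{b}_n\propto s_n(\zeta^{\mathrm{odd}}_q(2),\zeta^{\mathrm{odd}}_q(4),\ldots)$ with $s_n$ defined by $\exp\big(\sum_{m\geq1}p_mt^m/m\big)$, so the exponent runs over \emph{all} $m\geq1$; this is confirmed by the stated expansion $1+4\zeta^{\mathrm{odd}}_q(2)z^2+8(\zeta^{\mathrm{odd}}_q(2)^2+\zeta^{\mathrm{odd}}_q(4))z^4+\cdots$, whose $z^4$ coefficient contains $\zeta^{\mathrm{odd}}_q(4)$ and therefore cannot arise from an odd-$m$-only exponent. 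With the full sum, the Weierstrass product $\cos(\pi w/2)=\prod_{m\ \mathrm{odd}}(1-w^2/m^2)$ gives $\exp\big(\sum_{j\geq1}\zeta^{\mathrm{odd}}(2j)(4z^2)^j/j\big)=1/\cos(\pi z)$, i.e.\ a secant, which after the appropriate rescaling of $z$ matches the Alexandrov--Norbury coefficients $c_n$ generated by $1/\cos(2\pi z)$. Your function $\sqrt{\cosh\pi z/\cos\pi z}$ agrees with the secant only through order $z^2$ and differs at order $z^4$ ($\tfrac{\pi^4}{8}$ versus $\tfrac{5\pi^4}{24}$), so your limits of $\widehat{F}_{2k+1}$ would fail to equal $G_{2k+1}$ for $k\geq1$ and the induction would not reproduce \cref{volrecsup} beyond the lowest orders. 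Your own proposed cross-check -- directly evaluating $\int_0^\infty x^{2k+1}\widehat{H}(x,y)\,\dd x$ for the limiting kernel of \cref{qkerlimsup} -- would have exposed the discrepancy, since that integral produces secant (Euler-number) coefficients. The fix is simply to take the exponent over all $m\geq1$; the remainder of your argument then proceeds as in the paper.
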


\begin{proof}
The proof is similar to that of \cref{limit}, with the departure being the use of the following Weierstrass product, where $\zeta^{\mathrm{odd}}(2k)=\displaystyle\sum_{m \text{ odd}} \frac{1}{m^{2k}}$.
\begin{align*}
\cos(2\pi z)=\prod_{m~\mathrm{odd}} \bigg( 1-\frac{16z^2}{m^2} \bigg) \qquad &\Rightarrow \qquad \log\cos(2\pi z)=-\sum_{k=1}^\infty \frac{\zeta^{\mathrm{odd}}(2k)}{k} (4z)^{2k} \\
&\Rightarrow \qquad \frac{1}{\cos(2\pi z)} = \exp \bigg( \sum_{k=1}^\infty \frac{\zeta^{\mathrm{odd}}(2k)}{k} \, (4z)^{2k}\bigg)
\end{align*}

Alexandrov and the second author~\cite{ANoSup} introduce the function
\[
G_{2k+1}(y):=\int_0^\infty \!\! x^{2k+1}\widehat{H}(x,y) \, \dd x
\]
and show that it satisfies
\[
\frac{G_{2k+1}(y)}{(2k+1)!} = \sum_{i=0}^{k+1} c_n \frac{y^{2k+1-2n}}{(2k+1-2n)!}, \qquad \text{where } \frac{1}{\cos(2\pi z)}=\sum_{n=0}^\infty c_n z^{2n}.
\]

Since
\[
\lim_{q \to 1} (1-q)^{2k} \, \zeta^{\mathrm{odd}}_q(2k) = \zeta^{\mathrm{odd}}(2k) \qquad \text{and} \qquad \lim_{q \to 1} (1-q) \, \psi(q)^2 = \frac{\pi}{2},
\]
we have
\[
\lim_{q\to 1}(1-q)^{2k+2}\widehat{F}_{2k+1}\Big(\frac{y}{1-q}\Big)=G_{2k+1}(y).
\]
The polynomials $G_{2k+1}(y)$ uniquely determine the super Weil--Petersson volumes $\widehat{V}^{\mathrm{WP}}_{g,n}(L_1, \ldots, L_n)$, as shown by Alexandrov and the second author~\cite{ANoSup}. Similarly, the polynomials $\widehat{F}_{2k+1}(y)$ uniquely determine the polynomials $\widehat{V}_{g,n}(L_1, \ldots, L_n)$ from the recursion of \cref{qvolrecsup}, as outlined here in the proof of \cref{symsup}. One can check that the rescaled limits of the initial data matches as expected and the desired result then follows by induction.
\end{proof}

\bibliographystyle{plain}
\bibliography{q-mirzakhani}

\end{document}